\newcommand{\finde}{  {\hfill $\square$}  \vskip0.5cm }
\newtheorem{theorem}{Theorem}[section]                                          
\newtheorem{prop}[theorem]{Proposition}                          
\newtheorem{lemma}[theorem]{Lemma}
\newtheorem{cor}[theorem]{Corollary}
\newtheorem{example}[theorem]{Example}
\newtheorem{definition}[theorem]{Definition}
\def\FF{{\mathcal F}}
\def\GG{{\mathcal G}}
\def\QQ{{\mathcal Q}}
\def\XX{{\mathcal X}}
\def\YY{{\mathcal Y}}
\def\CC{{\mathcal C}}
\def\RR{{\mathcal R}}
\def\PP{{\mathcal P}}
\def\TT{{\mathcal T}}
\def\NN{{\mathcal N}}
\def\fd{{\, \rightarrow\, }}
\def\lfe{{\, \longleftarrow\, }}
\def\lfd{{\, \longrightarrow\, }}
\def\trian{{\bigtriangleup}}
\def\nada {{\, \rule{0.4cm}{0.5pt}\,}}
\def\Hom {\mathop {\rm Hom} \nolimits}
\def\Ext {\mathop {\rm Ext} \nolimits}
\def\End {\mathop {\rm End}\nolimits}
\def\add {\mathop {\rm add}\nolimits}
\def\ind {\mathop {\rm ind }\nolimits}
\def\supp {\mathop {\rm supp}\nolimits}
\def\Supp {\mathop {\rm Supp}\nolimits}
\def\mod {\mathop {\rm mod}\nolimits}
\def\pd {\mathop {\rm pd}\nolimits}
\def\id {\mathop {\rm id}\nolimits}
\def\rank {\mathop {\rm rank}\nolimits}
\renewcommand\thefigure{\thesection.\@arabic\c@figure}
\renewcommand\thetable{\thesection.\@arabic\c@table}
\title[ Stratifying systems 
over hereditary algebras]{Stratifying systems \\
over hereditary algebras}
\author{Paula A. Cadavid and Eduardo do N. Marcos}
\address{
USP, Instituto de Matemática e Estatística
\newline
Rua de Matão 1010, Cidade Universitaria, São Paulo-SP
\newline
e-mail:  paca@ime.usp.br
\newline
e-mail:  enmarcos@ime.usp.br
}
\keywords{Stratifying System, Hereditary Algebra, Tilting Module, Standard Hereditary Tube} 
\thanks{}
\begin{document}
 

\begin{abstract}
This paper deals with stratifying systems over hereditary algebras. In the case of tame hereditary algebras we obtain a bound for the size
of the stratifying systems composed only by regular modules and we conclude that stratifying systems can  not be complete. For wild hereditary 
algebras with more than $2$ vertices we show that there exists a complete stratifying system whose elements are regular modules. In the other 
case, we conclude that there are no stratifing system over them with regular modules.  In one  example we built all the stratifying systems, with 
a specific form, having maximal number of regular summads.
\end{abstract}

\maketitle

\section{Introduction} 
Let $A$ be a finite dimensional $K$-algebra. Let $S_1$, $S_2$, $\ldots$, $S_n$  be a complete list of all non-isomorphic 
simple $A$-modules and we fix this ordering of simple modules. Let $P_i$ be the projective cover of $S_i$. With this order 
of simple modules we define for each $i$ the standard module $\trian_{i}$ to be the maximal quotient of $P_i$ with composition 
factors $S_j$ with $j \leq i$. The concept of stratifying system ($s.s.$) was introduced as a generalization of the standard 
modules in (Erdmann and Sáenz, 2003). Later, E. Marcos, O. Mendoza and C. Sáenz introduced in (Marcos et al., 2004) and 
(Marcos et al., 2005) the notion  of $s.s.$ via relative simple modules and via relative projective modules, respectively. 
In these works, the authors show that these concepts are equivalent.

On the other hand, the concept of excepcional sequence was introduced in (Gordentsev and Rudakov, 1987) with the aim of to 
study vector bundles on $ \mathbb{P}^2 $. Such sequences have been also used by several authors in the study of derived 
categories of algebraic varieties (see for instance (Bondal and Kapranov, 1989), (Rudakov, 1990)). The corresponding notion 
in the context of representations of a quiver is considered in (Crawley-Boevey, 1993). We observe that over a hereditary 
algebra, the concept of $s.s.$ and exceptional sequence are equivalent.

The goal of this paper is to study $s.s.$ over hereditary algebras. We introduce the notion of complete stratifying system ($c.s.s.$) 
which is a $s.s.$ of maximal possible size and we discuss the existence of a $c.s.s.$ consisting only of regular modules for a 
hereditary algebra. Every $s.s$ can be extended to a complete one. We calculate all the $c.s.s.$ over the Kronecker algebra 
and generalized Kronecker algebras. In addition, we construct a family of $c.s.s.$ with a maximal number of regular elements 
over the canonical algebra $\trian(\widetilde{\mathbb {A}}_{p,q}) $.

The paper is organized as follows: We start  Section 2 with the main definitions and notations. Section 3 is devoted to the 
study of the relationship between stratifying systems and tilting modules, using this, we introduce the notion of complete 
stratifying system.  In Section 4 we show give a bound on the size  $s.s.$ consisting of regular modules, over a concealed 
algebra of Euclidean type. In section 5 we show that for any wild hereditary algebra with more than $2$ vertices there exists 
a complete stratifying system whose elements are regular modules. So we can conclude that an algebra, with more than two non 
isomorphic simple modules, has a $c.s.s$ consisting of regular modules if and only if it is wild. We also exibt in this section 
the construction of all the $c.s.s.$ over the Kronecker algebra and the generalized Kronecker algebras. In the last section, we 
construct a $s.s.$ of regular modules with maximal size $(F,G)$ over the canonical algebra $ \trian(\widetilde{\mathbb {A}}_{p,q})$, 
where $F,G$ are sets of simple regular modules. We also extend our construction to a $c.s.s.$ of the form $(X,F,G,Y)$.

\section{Preliminaries} 
\label{sec:intro}
The algebras to be considered in this paper are basic, connected and finite dimensional over an
algebraically closed field $K$. We denote by $\mod A$ the category whose objects are finitely generated rigth $A$-modules
and by $D$ the usual duality $\Hom_K(\nada, K): \mod A \lfd \mod A^{op}$. All the subcategories of $\mod A$ to be considered 
are full subcategories.

Let $\CC$ be a subcategory of $\mod A$. We say that $\CC$ is
\begin{enumerate}
 \item closed under extensions if for  every  short exact sequence  of $A$-modules $0\fd M_1  \fd M_2 \fd M_3 \fd 0$ with  
 $M_1,\,M_3 \in \CC$, also $M_2\in \CC$;
\item  closed under direct summands provided  for every $A$-module $M \in \CC$, any direct summand of $M$ belongs to $\CC$;
\item closed under kernels of epimorphisms if for every $A$-module epimorphism $f:M \fd N$ with $M, N \in \CC$, also $\ker(f)\in \CC$.
\end{enumerate}

Given a finite  set $X$ of modules, we denote  by $\FF(X)$ the subcategory of $\mod A$
containing the zero module and all modules which are filtered by modules in $X$. That is, 
a non zero module $M$ belongs to $\FF(X)$ when there is a finite chain 
\[0=M_0 \subseteq M_1 \subseteq \ldots \subseteq M_t=M\]
of submodules of $M$ such that $M_i/ M_{i-1}$ is isomorphic to a module in $X$, for all $i=1,2,\ldots,t.$ 
Other category related  with $\FF(X)$ is $\YY(X)$ whose objects are the modules $M  \in \mod  A \mbox{ such that }
\Ext^{1}_{A}(\nada,M)|_{\FF(X)}=0 .$

We introduce now the main notion in this work.

\begin{definition}(Erdmann and Sáenz, 2003) Let $X=\{X_i\}_{i=1}^{t}$ be a set of non-zero $A$-modules 
and ${\underline Y} = \{Y_i\}^{t}_{i=1}$ be
a set of indecomposable $A$-modules. The pair $(X,{\underline Y})$ is a {\bf stratifying system} $(s.s.)$ of size $t$,
if the following three conditions hold:
\begin{enumerate}
\item  $\Hom_{A}(X_j,X_i)=0$, for $j > i$.
\item  For each $i \in \{1, \ldots,t\}$ there is an exact sequence $0\fd X_i \fd Y_i \fd Z_i \fd 0, $
where $ Z_i \in \FF(X_1,\ldots, X_{i-1}).$
\item $\Ext^{1}_{A}( \nada ,Y )|_{\FF(X)}=0$, where $Y=\coprod_{i=1}^{t} Y_{i}$.
\end{enumerate}
\end{definition}

The modules in the set $X$ will be called relative simple modules  in
 $\FF(X)$. This nomenclature is justified by the fact that they are exactly the  modules which do  have not proper
 submodules in  the category $\FF(X)$. In order to give a characterization of stratifying systems depending only on 
 the set of relative simple modules we will make use of the following result.

\begin{prop}(Erdmann and Sáenz, 2003)\label{equivale} Given a set $X =\{X_i\}_{i=1}^{t}$
 of non-zero $A$-modules the following conditions are equivalent:
\begin{enumerate}
\item  There exists a set of indecomposable $A$-modules ${\underline Y }= \{Y_i\}_{i=1}^{t}$
 such that $(X, {\underline Y })$ is a stratifying system of size $t$.
\item  The set $X$ satisfies the following conditions:
\begin{enumerate}
\item $\Hom_{A}(X_j,X_i)=0$, for $j > i$.
\item $\Ext_{A}^{1}(X_j, X_i)=0,$ for $j \geq i$.   
\item $X_i$ is indecomposable, for all $i = 1, 2,\ldots,t$.
\end{enumerate}

\end{enumerate}
\end{prop}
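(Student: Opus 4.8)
The plan is to prove the two implications separately, treating $(1)\Rightarrow(2)$ as the routine direction and $(2)\Rightarrow(1)$, the existence of the $Y_i$, as the substantial one. For $(1)\Rightarrow(2)$, assume $(X,\underline Y)$ is a stratifying system. Condition (a) is literally axiom $(1)$ of the definition. For (b) I would apply $\Hom_A(X_j,\nada)$ to the defining sequence $0\to X_i\to Y_i\to Z_i\to 0$ of axiom $(2)$. In the resulting long exact sequence the term $\Ext^1_A(X_j,Y_i)$ vanishes, since $X_j\in\FF(X)$ and axiom $(3)$ gives $\Ext^1_A(\nada,Y)|_{\FF(X)}=0$; hence $\Ext^1_A(X_j,X_i)$ is a quotient of $\Hom_A(X_j,Z_i)$. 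When $j\ge i$, every filtration factor $X_k$ of $Z_i\in\FF(X_1,\dots,X_{i-1})$ has $k\le i-1<j$, so $\Hom_A(X_j,X_k)=0$ by axiom $(1)$; walking up the filtration gives $\Hom_A(X_j,Z_i)=0$ and therefore $\Ext^1_A(X_j,X_i)=0$ for all $j\ge i$, which is (b).

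For the indecomposability (c) I would produce a surjective ring homomorphism $\End_A(Y_i)\to\End_A(X_i)$ and invoke locality. Writing $\iota\colon X_i\hookrightarrow Y_i$, the map $\iota_*\colon\End_A(X_i)\to\Hom_A(X_i,Y_i)$ is an isomorphism because $\Hom_A(X_i,Z_i)=0$ (again by axiom $(1)$, as the factors of $Z_i$ have index $<i$), while $\iota^*\colon\End_A(Y_i)\to\Hom_A(X_i,Y_i)$ is surjective because $\Ext^1_A(Z_i,Y_i)=0$ by axiom $(3)$. Composing, $\phi\mapsto\iota_*^{-1}(\phi\iota)$ is the restriction to the submodule $X_i$, a surjective ring homomorphism onto $\End_A(X_i)$. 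Since $Y_i$ is indecomposable over an Artin algebra, $\End_A(Y_i)$ is local; a nonzero quotient of a local ring is again local, so $\End_A(X_i)$ is local and $X_i$ is indecomposable.

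For $(2)\Rightarrow(1)$ I must build the $Y_i$. I would construct each one by iterated universal (Bongartz-type) extensions: starting from $M_i=X_i$, for $j=i-1,i-2,\dots,1$ form the universal extension $0\to M_{j+1}\to M_j\to X_j^{d_j}\to 0$ with $d_j=\dim_K\Ext^1_A(X_j,M_{j+1})$, arranged so that $\Ext^1_A(X_j,M_j)=0$ (possible since $\Ext^1_A(X_j,X_j)=0$ by (b)), and set $Y_i=M_1$. The computational heart is the invariant $\Ext^1_A(X_m,M_j)=0$ for all $m\ge j$: its base case is (b) in the form $\Ext^1_A(X_m,X_i)=0$ for $m\ge i$, and in the inductive step the potentially obstructing term $\Ext^1_A(X_m,X_j^{d_j})$ vanishes precisely because $m>j$ forces $\Ext^1_A(X_m,X_j)=0$ by (b). At the end $\Ext^1_A(X_m,Y_i)=0$ for every $m$, and since any module of $\FF(X)$ is $X$-filtered this upgrades to $\Ext^1_A(\nada,Y_i)|_{\FF(X)}=0$, giving axiom $(3)$; the chain $X_i=M_i\subseteq\cdots\subseteq M_1=Y_i$ gives axiom $(2)$ with $Z_i=Y_i/X_i\in\FF(X_1,\dots,X_{i-1})$, and axiom $(1)$ is just (a).

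The main obstacle is the indecomposability of $Y_i$, which the universal extension does not hand over for free. Here I would argue that $Y_i$ is the relative injective envelope of $X_i$ in $\FF(X)$: it is relatively injective by the $\Ext$-vanishing just established, it contains $X_i$ as relative socle, and the minimality built into the choice $d_j=\dim_K\Ext^1_A(X_j,M_{j+1})$ should make $\iota\colon X_i\hookrightarrow Y_i$ essential in $\FF(X)$. Since $X_i$ is indecomposable with local endomorphism ring by (c), its relative injective envelope inherits a local endomorphism ring, forcing $Y_i$ indecomposable. Verifying this essentiality carefully — equivalently, that the relative socle of $Y_i$ is exactly $X_i$ rather than something larger — is the delicate point on which the whole direction rests, and is where I expect the real work to lie.
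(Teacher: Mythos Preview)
The paper does not prove this proposition at all: it is quoted verbatim from (Erdmann and S\'aenz, 2003) and stated without proof, serving only to motivate Definition~\ref{ss}. So there is no in-paper argument to compare against.

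On the merits of your sketch: the direction $(1)\Rightarrow(2)$ is complete and correct. The restriction map $\End_A(Y_i)\to\End_A(X_i)$ is indeed a surjective ring homomorphism for exactly the reasons you give, and locality descends to nonzero quotients.

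For $(2)\Rightarrow(1)$, the iterated universal-extension construction is the standard one and your invariant $\Ext^1_A(X_m,M_j)=0$ for $m\ge j$ is exactly right; it yields $Y_i$ satisfying axioms $(2)$ and $(3)$. You are also right that indecomposability is the crux, and your instinct to view $Y_i$ as a relative injective hull in $\FF(X)$ is the correct one. What you have not done is actually prove that $\iota\colon X_i\hookrightarrow Y_i$ is essential (equivalently, that any nonzero direct summand of $Y_i$ in $\FF(X)$ meets $X_i$). This is not automatic from ``minimality of $d_j$'' alone; one must argue, for instance, that if $Y_i=U\oplus V$ with $U,V\in\FF(X)$ nonzero, then both summands would have to contain a copy of some $X_k$ as relative socle, and a count of filtration multiplicities $[Y_i:X_i]=1$ forces a contradiction. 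You flag this honestly as the place ``where I expect the real work to lie,'' but as written the argument is a plan rather than a proof for this step.
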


As a consequence of the previous result we have the following alternative definition of stratifying
systems.
     
\begin{definition} (Marcos et al., 2004)\label{ss} A {\bf stratifying system} ($s.s.$) of size $t$ consist
  of a set  $X = \{X_i\}_{i=1}^{t}$ of indecomposable $A$-modules
  satisfying the following conditions: 
\begin{enumerate}
\item $\Hom_{A}(X_j,X_i)=0$, for $j > i$.
\item $\Ext_{A}^{1}(X_j, X_i)=0,$ for $j \geq i$ .   
\end{enumerate}
\end{definition}

Given a $s.s.$ $X$ and an  $A$-module $M \in \FF(X)$, we
 denote by $[M:X_i]$  the multiplicity of the $A$-module
 $X_i$ on a given filtration of $M$. The number  $[M:X_i]$ is
 well  defined,  that is,  it does not depend on a given filtration of $M$ in $X$ (see (Erdmann and Sáenz, 2003)).
 The category $\FF(X)$ is closed by direct summads, under  extensions and funtorially finite in $\mod A$, so it has (relative)
almost split sequences (see (Ringel, 1991)). On the other hand, it is  not necessarily closed under kernels  of epimorphisms 
(see (Marcos et al., 2004)).

Let $A$ be a hereditary $K$-algebra. An $A$-module $X$ is called {\bf exceptional}
if $\End_A(X)\cong K$ and $\Ext^{1}_{A}(X,X)=0$ and a sequence $X=(X_1, \ldots,X_t)$  is called {\bf exceptional} if  it is a sequence 
of exceptional modules satisfaying $\Hom_A(X_i,X_j)=0$, for $i>j$ and $\Ext^{1}_A(X_i,X_j)=0$ for $i \geq j.$ A sequence $X=(X_1,\ldots,X_n)$  
is said to be complete if $n$ is the number of isomorphism classes of simple 
$A$-modules.

We observe that if $A$ is a hereditary $K$-algebra, then the sequence of $A$-modules $(X_1,\ldots, X_t)$ 
is an exceptional sequence if, and only if, the set $X = \{X_i\}_{i=1}^{t}$ is a $s.s.$ over $A$.

\section{Stratifying systems over hereditary algebras and tilting
         modules} 
We recall that an $A$-module $T$ is called  {\bf partial tilting} if the following 
conditions hold:
\begin{enumerate} 
\item[T1.] $\pd T \leq 1$,
\item [T2.] $\Ext_A^{1}(T,T)=0.$
\end{enumerate}
 A partial tilting module $T$ is called {\bf tilting module} if it satisfies 
the following  additional condition:
\begin{enumerate}
\item [T3.] There exist a short exact sequence $0\fd A_A \fd T_1 \fd T_2 \fd 0$, with $T_1$, $T_2$  in
$\add T$, where $\add T$ is the subcategory of $\mod\,A$ whose objects are direct sums of direct summands of $T$.

\end{enumerate}
Finally, a tilting $A$-module $T$ is called {\bf basic tilting module} if the number of pairwise non-isomorphic 
indecomposable summands  of $T$ is the rank of $K_0(A)$. 

\begin{lemma}\label{incli}Let $Q$ be a finite, connected and acyclic  quiver with $n$ vertices, $A=KQ$ and 
 $X = \{X_i\}_{i=1}^{t}$ be a $s.s.$ over $A$. Then $\FF(X)\cap\YY(X)=\add T$, where $T$ is a basic partial 
tilting $A$-module, and $t\leq n$.
\end{lemma}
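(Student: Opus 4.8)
The plan is to take $T=\bigoplus_{i=1}^{t}Y_i$, where $\underline Y=\{Y_i\}_{i=1}^{t}$ is a family of indecomposable modules making $(X,\underline Y)$ a stratifying system (such a family exists by Proposition \ref{equivale} together with the Erdmann--S\'aenz definition), and to verify that this $T$ has the required properties. First I would record that each $Y_i$ lies in $\FF(X)\cap\YY(X)$: it lies in $\YY(X)$ because condition (3) of the definition gives $\Ext^{1}_{A}(\nada,Y)|_{\FF(X)}=0$ for $Y=\coprod Y_i$, and it lies in $\FF(X)$ because the exact sequence $0\fd X_i\fd Y_i\fd Z_i\fd 0$ with $Z_i\in\FF(X_1,\ldots,X_{i-1})$ exhibits $Y_i$ as an extension of an $X$-filtered module by $X_i$. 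Since both $\FF(X)$ and $\YY(X)$ are closed under finite direct sums and under direct summands, this yields $\add T\subseteq\FF(X)\cap\YY(X)$. I would then note that $T$ is partial tilting: $\pd T\leq 1$ is automatic because $A=KQ$ is hereditary, and $\Ext^{1}_{A}(T,T)=0$ follows at once from $T\in\FF(X)$ and $T\in\YY(X)$.

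For the reverse inclusion $\FF(X)\cap\YY(X)\subseteq\add T$, the key observation is that $\FF(X)\cap\YY(X)$ is exactly the subcategory of relative (Ext-)injective objects of $\FF(X)$. I would use that $\FF(X)$ is functorially finite and closed under extensions and direct summands, so by the theory of relative almost split sequences (Ringel, 1991) it possesses a well-defined set of indecomposable relative injectives, and argue that these are precisely $Y_1,\ldots,Y_t$, one attached to each relative simple $X_i$. Concretely, an indecomposable $M\in\FF(X)\cap\YY(X)$ must coincide with one of the $Y_i$ via the relative-injective-envelope machinery. This identification---that the indecomposable relative injectives of $\FF(X)$ are the $Y_i$ and nothing else---is the main obstacle, and it is where the structure theory of stratifying systems (relative simples versus relative injectives) is genuinely required.

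Finally I would count summands. The multiplicities $[\,\nada:X_i]$ are well defined on $\FF(X)$, and from $0\fd X_i\fd Y_i\fd Z_i\fd 0$ with $Z_i\in\FF(X_1,\ldots,X_{i-1})$ one reads off $[Y_i:X_i]=1$ and $[Y_i:X_k]=0$ for $k>i$, so that $i=\max\{k:[Y_i:X_k]\neq 0\}$. Hence the $Y_i$ are pairwise non-isomorphic and $T=\bigoplus_{i=1}^{t}Y_i$ is a \emph{basic} partial tilting module with exactly $t$ non-isomorphic indecomposable summands. Since every basic partial tilting module over a hereditary algebra can be completed (Bongartz's lemma) to a basic tilting module, and the latter has precisely $\rank K_0(A)=n$ indecomposable summands, we obtain $t\leq n$, which completes the proof.
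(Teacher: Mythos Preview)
Your proposal is correct and follows essentially the same approach as the paper. The only difference is one of packaging: the paper simply invokes Theorem~2.4 of (Marcos et al., 2004) for the identity $\FF(X)\cap\YY(X)=\add Y$ with $Y=\bigoplus_i Y_i$ basic and $\Ext^1_A(Y,Y)=0$, whereas you unpack and re-derive that cited result (the easy inclusion, the reverse inclusion via relative injectives, and the summand count via filtration multiplicities) before concluding $t\leq n$ from the standard bound on partial tilting modules.
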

\begin{proof} According to Theorem 2.4  in (Marcos et al., 2004) we have that $\FF(X)\cap\YY(X)=\add Y$, where 
$Y$ is an $A$-module with $t$ pairwise non-isomorphic indecomposable direct summands and such that $\Ext^1_A(Y,Y)=0$. 
This means that $Y$ is a  partial tilting module and therefore we conclude that $t\leq n$.
\end{proof}
In the previous lemma the hypothesis that $A$ is a hereditary algebra is essential as the following example shows.
\begin{example}(Marcos et al., 2004) Let $A$ be  the path algebra of the quiver $3 \overset{\alpha}\lfd 1 \overset{\beta}\lfe 2\overset{\gamma}\lfe 4 $
bound by  $\gamma\beta$. Taking $X_1= S_1=P_1$, $X_2=P_2$,
$X_3=P_3$, $X_4=P_4=I_2$ and $X_5=S_4$ we have that $X=\{X_1,X_2,X_3,X_4,X_5\}$ is a $s.s.$ of size $5$.
\end{example}
This motivates the following definition.
\begin{definition} Let $Q$ be a finite, connected and acyclic  quiver with $n$ vertices and $A=KQ$.
A $s.s.$  $X = \{X_i\}_{i=1}^{t}$ over $A$ is called {\bf complete}  ($c.s.s.$) if $t=n.$
\end{definition}

Let $Q=(Q_0 ,Q_1)$ be a quiver such that $|Q_0|=n.$  An {\bf admissible numbering} of $Q$
is a bijection between $Q_0$ and the set $\{1,2, \ldots n\}$ such that, if  we have an 
arrow $j \fd i$ then $j>i$. Let us note that  a connected quiver $Q$, with more than one vertice, is an acyclic quiver if 
and only if  there exist an admissible numbering of $Q$.

\begin{prop}Let $A$ be a hereditary $K$-algebra and $T$ be a basic tilting $A$-module.
Then there exists a numbering of the sumands of $T=\coprod_{i=1}^{n}T_i$ such that 
$(T_1,\ldots, T_n)$ is a $c.s.s.$
\end{prop}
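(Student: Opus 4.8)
The plan is to verify the two conditions of Definition~\ref{ss} for a suitable ordering of the summands $T_1,\ldots,T_n$; since $T$ is basic tilting, $n$ equals the rank of $K_0(A)$, so once $(T_1,\ldots,T_n)$ is shown to be a $s.s.$ it is automatically complete. Two of the ingredients are immediate and independent of the ordering. First, each $T_i$ is an indecomposable direct summand of $T$, so the modules are indecomposable as required, and they are pairwise non-isomorphic because $T$ is basic. Second, condition T2 gives $\Ext^1_A(T,T)=0$, whence $\Ext^1_A(T_j,T_i)=0$ for all pairs $(i,j)$, in particular for $j\geq i$; thus condition (2) of Definition~\ref{ss} holds for \emph{any} numbering. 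The whole problem therefore reduces to finding a numbering of the summands for which $\Hom_A(T_j,T_i)=0$ whenever $j>i$.

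To produce such a numbering I would pass to the endomorphism algebra $B=\End_A(T)$. Since $A$ is basic and $T$ is basic tilting, $B$ is a basic finite dimensional $K$-algebra, so $B\cong KQ_B/I$ for its Gabriel quiver $Q_B$, whose vertices are in bijection with the summands $T_1,\ldots,T_n$. Writing $e_i$ for the primitive idempotent attached to $T_i$, one has the $K$-vector space isomorphism $\Hom_A(T_j,T_i)\cong e_iBe_j$, and $e_iBe_j$ is spanned by the residues of the paths from $j$ to $i$ in $Q_B$. Consequently, if $Q_B$ admits a numbering of its vertices in which every arrow goes from a smaller to a larger index, then every nonzero path from $j$ to $i$ forces $j\leq i$, so $\Hom_A(T_j,T_i)=0$ for $j>i$. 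This is exactly condition (1) of Definition~\ref{ss}, and transporting this numbering to the summands $T_i$ finishes the verification. By the observation on admissible numberings preceding this proposition, such a numbering of $Q_B$ exists as soon as $Q_B$ is acyclic (it is an admissible numbering of $Q_B^{op}$).

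Thus everything hinges on the quiver $Q_B$ being acyclic, i.e. on $B=\End_A(T)$ being a triangular algebra, and this is the main obstacle and the only nontrivial input. It is the classical fact that the endomorphism algebra of a tilting module over a hereditary algebra is a tilted, hence triangular, algebra. Equivalently, and more in the spirit of this paper, the point to be proved is that the indecomposable summands of a rigid module over a hereditary algebra cannot lie on an oriented cycle $T_{i_0}\to T_{i_1}\to\cdots\to T_{i_r}=T_{i_0}$ of nonzero non-isomorphisms; it is the vanishing $\Ext^1_A(T,T)=0$ together with the hereditary hypothesis that rules out such cycles, and this is where I expect the real work to lie. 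Once acyclicity of $Q_B$ is granted, combining the three observations above shows that $(T_1,\ldots,T_n)$, numbered as described, is a stratifying system of size $n=\rank K_0(A)$, that is, a $c.s.s.$
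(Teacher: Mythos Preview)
Your proposal is correct and follows essentially the same route as the paper: pass to $B=\End_A(T)$, use that its ordinary quiver $Q_B$ is acyclic, pick an admissible numbering of the vertices, and transport it back to the summands $T_i$ to obtain the required $\Hom$-vanishing, the $\Ext$-vanishing being automatic from condition T2. The only differences are cosmetic: the paper phrases the transport via the equivalence $\Hom_A(T,-)\colon\add T\to\PP(B)$ rather than via $\Hom_A(T_j,T_i)\cong e_iBe_j$, and it simply asserts the acyclicity of $Q_B$, whereas you (rightly) single it out as the one nontrivial input, namely the classical fact that tilted algebras are triangular.
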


\begin{proof} Let $B=\End_A(T)$. Then the ordinary quiver $Q_B$ of $B$ is acyclic. If 
${\boldsymbol e}=(e_1,\ldots,e_n)$ is an admissible ordering of the vertices of $Q_B$ and we take
$P_i=e_{i}B$ then $\Hom_B(P_i,P_j)=0,$ for $j<i$. So $(P_1, \ldots, P_n)$ is a $s.s.$ over $B$. Since
$\Hom_A(T,\nada): \add T \lfd \PP$, when  $\PP$ is the category of projective $B$-modules,
is  an equivalence of categories then $\Hom_A(T_i,T_j)=0,$ for $j<i$. But $\Ext^1_{A}(T_i,T_j)=0$,
for all $i,j$, then we get that $(T_1, \ldots, T_n)$ is a $c.s.s.$
\end{proof}

\section{ s.s over hereditary standard stable tubes} 

We start this section by  collecting  some preliminary facts about hereditary standard stable tubes 
needed latter in this section. The facts which we describe next can be found  
in (Simsom and Skowronski, 2007, Vol. 2 and Vol. 3)  and we include here for the sake of completeness.  

We recalling that a standard tube $\TT$, of the Auslander-Reiten quiver of an algebra $A$, 
is hereditary when $\pd X \leq 1$ and $\id X \leq 1$, for any module $X$ of $\TT$. 

Let $A$ be  an algebra, $\TT_{\lambda}$ be a fixed  hereditary standard stable tube of rank $r_{\lambda}$ in
$\Gamma(\mod A)$. The set of modules having exactly  one immediate  predecessor in $\TT_\lambda$ 
is called the mouth of $\TT_\lambda$. Given a module $X$ lying on the mouth  of $\TT_\lambda$, a ray  starting at $X$ is 
defined to be the unique infinite sectional path $X=X[1]\fd X[2]\fd X[3]\fd \cdots \fd X[m]\fd \cdots $ in $\TT_\lambda$.

Let $E_1,E_2 \dots, E_{r_{\lambda}}$ be the $A$-modules lying on the mouth of $\TT_{\lambda}$
which form a $\tau_A$-cycle $(E_1,E_2 \dots, E_{r_{\lambda}})$, that is, they are ordered in such a way that
$\tau E_2 \cong E_1$, $\tau E_3\cong E_2, \ldots, \tau E_{r_{\lambda}} \cong  E_{r_{\lambda}-1},\tau E_1\cong E_{r_{\lambda}}. $ Then
\begin{itemize}
\item  the modules $E_1,E_2 \dots, E_{r_{\lambda}}$ are  pairwise  orthogonal bricks in $\mod A$;
\item every indecomposble $A$-module $M$ in $\TT_{\lambda}$ is of the form $M \cong E_j[m]$, for some $j\in\{1,\ldots,r_{\lambda}\}$
      and $m\geq 1$;
\item the subcategory $\add \TT_{\lambda}$ of $\mod A$ is hereditary, abelian  and the modules 
$E_1,E_2 \dots, E_{r_{\lambda}}$ form a complete  set of pairwise  non-isomorphic  simple objects of  $\add \TT_{\lambda}$;
\item the module $E_i=E_i[1]$ is the unique  simple subobject of $M\cong E_{i}[j]$, and $j$ is the length $\ell_{\lambda}(M)$ 
      of $X$ in $\add \TT_{\lambda}$.
\end{itemize}

For any indecomposable $A$-module $M\cong E_{i}[j]$ we denote by $\CC(M)$ the cone in the tube $\TT_{\lambda}$ 
determined by $M$. That is, $\CC(M)$ is the full translation subquiver of  $ \TT_{\lambda}$ of the form 
\begin{displaymath}
\xymatrix@!0 @R=2em @C=2pc{
E_i[1] \ar[dr] && E_{i+1}[1] \ar[dr] && E_{i+2}[1] \ar[dr] && \ldots && E_{i+j-2}[1] \ar[dr] && E_{i+j-1}[1]  \\
& E_i[2] \ar[dr]\ar[ur] && E_{i+1}[2] \ar[dr]\ar[ur] && \ddots && \iddots \ar[ur] \ar[dr] &&  E_{i+j-2}[2] \ar[ur]\\
& & E_i[3] \ar[ur]\ar[dr] &&  \ddots  \ar[dr] &&   \iddots &&  \iddots \ar[ur] \\
&&& \ddots \ar[dr] && E_{i}[j-1] \ar[dr] \ar[ur] && \iddots \\
&&&& E_i[j-1]  \ar[dr] \ar[ur] &&  E_i[j-1]  \ar[ur] \\
&&&&& E_{i}[j]\cong M.\ar[ur]\\
}
\end{displaymath}

\begin{lemma}\label{oba1}Let  $A$ be an algebra and $\TT_{\lambda}$ a hereditary standard stable tube of 
$\Gamma(\mod A)$ of rank $r_{\lambda} \geq 1$.  If $N_1,\ldots,N_t$ are parwise non-isomorphic indecomposable 
$A$-modules such that $\Ext^1_A(N_1\oplus\ldots\oplus N_t,N_1\oplus\ldots\oplus N_t)=0$ and $\CC(N_i)\cap\CC(N_j)=\emptyset$, for $i\not= j,$ then 
$ \ell_{\lambda}(N_{1}) + \cdots +\ell_{\lambda}(N_{t}) \leq r_{\lambda}-t.$
\end{lemma}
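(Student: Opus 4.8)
The plan is to translate everything into the combinatorics of the mouth of $\TT_{\lambda}$, identified with the cyclic set $\{E_1,\dots,E_{r_{\lambda}}\}$, and to read off the desired inequality as a counting statement about disjoint arcs together with forced gaps between them.

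First I would record the structural consequences of the hypotheses. Writing each $N_i\cong E_{a_i}[\ell_i]$ with $\ell_i=\ell_{\lambda}(N_i)$, the cone $\CC(N_i)$ meets the mouth exactly in the $\ell_i$ consecutive bricks $E_{a_i},E_{a_i+1},\dots,E_{a_i+\ell_i-1}$ (indices mod $r_{\lambda}$); call this arc $I_i$. The self-extension vanishing $\Ext^1_A(N_i,N_i)=0$, via the Auslander--Reiten formula $\Ext^1_A(N_i,N_i)\cong D\Hom_A(N_i,\tau N_i)$ together with $\tau E_{a_i}[\ell_i]\cong E_{a_i-1}[\ell_i]$, forces $\ell_i\le r_{\lambda}-1$; indeed, an indecomposable in a stable tube of rank $r_{\lambda}$ is rigid precisely when its quasi-length is at most $r_{\lambda}-1$. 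Hence each $I_i$ is a genuine arc of $\ell_i$ distinct mouth vertices and does not wrap around. In particular, when $t=1$ this already gives $\ell_1\le r_{\lambda}-1=r_{\lambda}-t$.

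Next I would exploit that the cones are pairwise disjoint. Since a mouth brick lying in both $\CC(N_i)$ and $\CC(N_j)$ would force $I_i\cap I_j\neq\emptyset$, disjointness of the cones implies the arcs $I_1,\dots,I_t$ are pairwise disjoint. Placing these $t$ disjoint arcs on the cycle of $r_{\lambda}$ vertices decomposes the mouth into the arcs together with $t$ complementary gaps $g_1,\dots,g_t\ge 0$, one between each pair of cyclically consecutive arcs, so that $\sum_{i}\ell_i+\sum_{k}g_k=r_{\lambda}$.

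The heart of the argument, and the step I expect to be the \emph{main obstacle}, is to show every gap satisfies $g_k\ge 1$ (for $t=1$ this is the bound $\ell_1\le r_{\lambda}-1$ just obtained, so assume $t\ge 2$). Suppose some gap vanishes, so that two arcs are cyclically adjacent; after relabelling we may assume $I_i=[a,a+\ell_i-1]$ and $I_j=[a+\ell_i,a+\ell_i+\ell_j-1]$, i.e.\ $a_j=a+\ell_i$. Consider the indecomposable $W=E_a[\ell_i+\ell_j]$ of the tube. Its socle series exhibits $N_i=E_a[\ell_i]$ as a submodule with quotient $W/N_i\cong E_{a+\ell_i}[\ell_j]=N_j$, yielding a short exact sequence $0\fd N_i\fd W\fd N_j\fd 0$ in $\add\TT_{\lambda}$, and hence in $\mod A$ since short exact sequences of $\add\TT_{\lambda}$ remain exact in $\mod A$. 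As $W$ is indecomposable and $\ell_i,\ell_j\ge 1$, this sequence does not split, so it is a nonzero element of $\Ext^1_A(N_j,N_i)$, contradicting $\Ext^1_A(N_1\oplus\ldots\oplus N_t,N_1\oplus\ldots\oplus N_t)=0$. Therefore each $g_k\ge1$, whence $\sum_k g_k\ge t$ and finally $\sum_i\ell_i=r_{\lambda}-\sum_k g_k\le r_{\lambda}-t$, as claimed. The only points to verify carefully are the exactness of $\add\TT_{\lambda}\hookrightarrow\mod A$ and the identification $W/E_a[\ell_i]\cong E_{a+\ell_i}[\ell_j]$, both of which follow from the uniserial structure of the tube modules recorded before the lemma.
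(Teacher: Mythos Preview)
Your proof is correct and follows the same underlying idea as the paper: reduce to the combinatorics of the mouth and show that the $\Ext^1$-vanishing forces a one-vertex buffer between the arcs $I_i$. The presentations differ only at the key step. The paper attaches to each arc $I_i$ the set $\NN_i^{*}=\tau\CC(N_i)\cap\{X_1,\dots,X_{r_\lambda}\}$, which is $I_i$ shifted one step to the left, so that $|\NN_i\cup\NN_i^{*}|=\ell_i+1$; it then invokes Lemma~1.7 of Simson--Skowro\'nski (Vol.~3, Chap.~XVII) to conclude $\CC(N_i)\cap\tau\CC(N_j)=\emptyset$, hence the enlarged arcs $\NN_i\cup\NN_i^{*}$ are pairwise disjoint and the count $\sum(\ell_i+1)\le r_\lambda$ drops out. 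You instead keep the arcs as they are and prove directly that each gap has length $\ge 1$ by exhibiting, whenever two arcs are adjacent, the non-split sequence $0\to N_i\to E_a[\ell_i+\ell_j]\to N_j\to 0$ inside the uniserial category $\add\TT_\lambda$. This is exactly the content of the cited lemma unpacked by hand, so your argument is a touch more self-contained, while the paper's is shorter given the reference; neither approach gains or loses any real generality.
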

\begin{proof}Let $(X_1,\ldots, X_{r_{\lambda}})$ be a $\tau$-cycle of mouth modules of the tube
 $\TT_{\lambda}$. We denote by $\NN_i$ and $\NN_i^{*}$ the sets $\NN_i=\CC(\NN_i)\cap\{X_1,\ldots X_{r_{\lambda}}\}$
 and $\NN_i^{*}= \tau\CC(N_i)\cap \{X_1,\ldots,X_{r_{\lambda}}\}.$ If  $N_i \cong X_{l_i}[t_i]$, then
 $$\NN_i=\{ X_{l_i}[1],\ldots, X_{l_{i}+t_{i}-1}[1]\} \text{ and }\NN_i^{*}=\{ X_{l_{i}-1}[1],\ldots, X_{l_{i}+t_{i}-2}[1]\}.$$ 
 Therefore $|\NN_i \cup \NN_i^{*} |= t_i +1= \ell_{\lambda}(N_i)+1.$ It follows from the hypothesis and  from Lemma 1.7 in 
 (Simsom and Skowronski, 2007, Vol. 3, Chap. XVII) that  $\CC(N_i)\cap\tau\CC(N_j)=\emptyset$ and $\tau\CC(N_i) \cap \CC(N_j)=\emptyset$. Hence
$$\left|\bigcup_{i=1}^{t}\left(\NN_i \cup \NN_i^{*}\right)\right|=\left(\sum_{i=1}^{t} \ell_{\lambda}(N_i) \right)+t. $$
On the other hand, we have $ \bigcup_{i=1}^{t}\left(\NN_i \cup \NN_i^{*} \right) \subseteq \{X_1, \ldots, X_{r_{\lambda}}\}.$ Then
$\sum_{i=1}^{t} \ell_{\lambda}(N_i)\leq r_{\lambda}-t.$
\end{proof}
 
\begin{prop}\label{llegue2}Let  $A$ be an algebra and $\TT_{\lambda}$ a hereditary standard stable tube of 
$\Gamma(\mod A)$ of rank $r_{\lambda} \geq 1$. If $T$ is a multiplicity-free partial tilting $A$-module, then $T$ has 
at most $r_{\lambda}-1$  direct summands in $\add \TT_{\lambda}$.
\end{prop}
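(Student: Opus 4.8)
The plan is to reduce the statement to a purely combinatorial fact about cyclic intervals on the mouth of $\TT_\lambda$. Write $r=r_\lambda$ and let $T_1,\dots,T_t$ be the pairwise non-isomorphic indecomposable direct summands of $T$ that lie in $\add\TT_\lambda$; each has the form $T_i\cong E_{a_i}[b_i]$ with $b_i=\ell_\lambda(T_i)$. Since $T$ is partial tilting we have $\Ext^1_A(T_i,T_j)=0$ for all $i,j$, so $\{T_1,\dots,T_t\}$ is a rigid family of modules in the tube. First I would record the elementary consequences of rigidity coming from the standard formulas for $\Hom$ and $\Ext^1$ inside a hereditary standard stable tube (the same input used in Lemma~\ref{oba1}, via Lemma~1.7 of Simson and Skowronski): applying $\Ext^1_A(T_i,T_i)=0$ forces $b_i\le r-1$, so every $T_i$ is a brick and the associated cyclic arc $I_i:=\{a_i,a_i+1,\dots,a_i+b_i-1\}\subseteq \mathbb{Z}/r$ is proper.

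Next I would extract two relations from the vanishing of $\Ext^1_A(T_i,T_j)$ in both orders. Setting $e_i:=a_i+b_i$ (the position just past the top of $T_i$), this vanishing translates into the inequality $(e_j-a_i)\bmod r\ \ge\ \min(b_i,b_j)$ for all ordered pairs. Two things fall out. On one hand, since $\min(b_i,b_j)\ge 1$, no ``start'' can coincide with an ``end'': $a_i\neq e_j$ for all $i,j$, that is, the sets $S=\{a_i\}$ and $E=\{e_i\}$ are disjoint in $\mathbb{Z}/r$. On the other hand, if two arcs $I_i,I_j$ were to \emph{cross} (overlap with neither contained in the other), then the inequality fails in one of the two orders; hence the family $\{I_i\}$ is laminar, i.e. any two arcs are either nested or disjoint.

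I would then encode the family as a graph $G$ on the vertex set $\mathbb{Z}/r$ whose edges are the pairs $\{a_i,e_i\}$, one for each $T_i$. Because a proper arc is determined by its start and its end, distinct summands give distinct edges, while $a_i\neq e_i$ rules out loops; by the previous paragraph $G$ is bipartite with parts $S$ and $E$. The heart of the argument is to show that $G$ is a forest. A cycle in $G$ would alternate starts and ends and, read as chords of the circle $\mathbb{Z}/r$, would form a simple closed polygon inscribed in that circle; its vertices would therefore appear in cyclic order around the circle, which forces some of the corresponding arcs (those whose end precedes their start along the circle, and which must wrap around) to cross, contradicting laminarity. Granting acyclicity, a forest on $r$ vertices has at most $r-1$ edges, whence $t\le r-1$, which is the assertion.

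The step I expect to be the real obstacle is precisely this acyclicity analysis, because it is exactly where the hypothesis of Lemma~\ref{oba1} is being relaxed: there the cones (equivalently the arcs) are assumed pairwise disjoint, so $G$ is automatically a matching and the bound is immediate, whereas here nested families such as the ``fan'' $E_{1}[1],E_{1}[2],\dots,E_{1}[r-1]$ realise $t=r-1$ with all cones overlapping. The delicate point is the cyclic wrap-around: one must verify that the laminar condition together with the forward (start-to-end) orientation of each arc genuinely forbids a closed cycle of edges, and not merely an isolated pair of crossing arcs. Once that is settled the remaining count is purely graph-theoretic.
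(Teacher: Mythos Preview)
Your plan is correct and takes a genuinely different route from the paper's. The paper argues by a two-level count: among the summands of $T$ lying in $\TT_\lambda$ it singles out those whose cones are maximal under inclusion, say $N_1,\dots,N_\ell$; since any two cones in a rigid family are nested or disjoint (your laminarity), the maximal ones are pairwise disjoint, so Lemma~\ref{oba1} gives $\sum_k\ell_\lambda(N_k)\le r_\lambda-\ell$. Then Lemma~1.6 of Simson--Skowro\'nski (Vol.~3, Chap.~XVII) bounds the number of summands inside each $\CC(N_k)$ by $\ell_\lambda(N_k)$, and summing gives $t\le r_\lambda-\ell\le r_\lambda-1$. So the paper never proves a global forest statement; it outsources the within-cone count to an external lemma. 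Your graph-theoretic argument replaces both counts by a single combinatorial fact and is more self-contained, at the price of having to establish acyclicity from scratch.

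Your translation of $\Ext^1(T_i,T_j)=0$ into $(e_j-a_i)\bmod r\ge\min(b_i,b_j)$, and its consequences (laminarity, $S\cap E=\varnothing$, bipartiteness, distinct loopless edges), are all correct. The acyclicity sketch, however, compresses two genuine steps into one sentence. The phrase ``its vertices would therefore appear in cyclic order'' really needs: (i) laminar arcs correspond to non-crossing \emph{chords} when each edge $\{a_i,e_i\}$ is drawn as a straight segment in the disk (an easy check: two proper arcs are nested or disjoint iff the associated chords do not cross); and (ii) a cycle of pairwise non-crossing chords on points in convex position is a simple polygon and hence visits its vertices in cyclic order. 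Once you have the cyclic order $s_1,e_1,s_2,e_2,\dots,s_k,e_k$ (bipartiteness forces $k\ge2$), your conclusion is immediate: the edges $\{e_i,s_{i+1}\}$ yield ``long'' arcs $[s_{i+1},e_i-1]$ wrapping around, and any two of these overlap without nesting.

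If you prefer to avoid the convex-polygon detour, here is a direct finish: choose a \emph{minimal} arc $I_0=[s_0,e_0-1]$ among those appearing in a putative cycle. The second cycle edge at $s_0$ gives an arc $I_1$ with the same start, so $I_0\subsetneq I_1$; the second cycle edge at $e_0$ gives an arc $I_2$ with the same end, so $I_0\subsetneq I_2$. Since $s_2\ne s_0$, $e_1\ne e_0$, and $S\cap E=\varnothing$ forces $e_1\ne s_2$, the union $I_1\cup I_2$ is a proper arc and one checks that $I_1,I_2$ overlap with neither contained in the other, contradicting laminarity.
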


\begin{proof} Let $T$ be a  partial tilting $A$-module. We define sets $\XX$ and $\XX_{0}$ as follow
 \[
 \begin{array}{rcl}
 \XX &=&\{X  : X \in \add \TT_{\lambda} \cap\add T \cap  \ind A\}  \text{ and }\\
\XX_{0}&=&\{X \in \XX : \forall Y \in\XX, \CC(Y)\subseteq \CC(X)\text{ or } \CC(X)\cap \CC(Y)= \emptyset\}.
 \end{array}
 \]
Since $T$ is a multiplicity-free partial tilting $A$-module hence $\XX$ and $\XX_{0}$ are finite.
We note that if $X_1 ,X_2 \in \XX_0$ and $X_1  \not =X_2$, then $\CC(X_1) \cap\CC(X_2)=\emptyset. $ Therefore  
if $|\XX_0|=\ell$, by \ref{oba1}, we have 
 $\sum_{X \in \XX_0}\ell_{\lambda}(X) \leq r_{\lambda}-\ell. $
 Furthermore, if $X \in \XX_{0}$ then, by  1.6 (Simsom and Skowronski, 2007, Vol. 3, Chap. XVII), $| \CC(X) \cap \XX | \leq \ell_{\lambda}(X).$
Thus
 $$|\XX| \leq \sum_{X \in \XX_{0}} | \CC(X) \cap \XX| \leq \sum_{X \in \XX_0}\ell_{\lambda}(X) \leq r_{\lambda}-\ell  $$
and we conclude that if $\XX\not = \emptyset$ then $|\XX|\leq r_{\lambda}-1.$
  \end{proof}

\begin{prop}\label{regmax} Let $A$ be an algebra and $\TT_{\lambda}$ a hereditary standard stable tube of 
$\Gamma(\mod A)$ of rank $r_{\lambda} \geq 1$. Then there exist a $s.s.$ of size $r_{\lambda}-1$ lying
in  $\mathcal{T}_{\lambda}$. Further, if $X=\{X_{1},\ldots,X_{t}\}$ is a $s.s.$
such that $X_i \in \add \mathcal{T}_{\lambda}$, for all $i=1,\ldots, t$, then $t\leq r_{\lambda}-1$.
\end{prop}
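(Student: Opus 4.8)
The plan is to prove the two assertions separately: first exhibit a stratifying system of size $r_\lambda-1$ inside $\TT_\lambda$, and then bound the size of an arbitrary stratifying system lying in $\add \TT_\lambda$.

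For the existence part I would look for the simplest possible modules in $\TT_\lambda$, namely the mouth modules. Fix a $\tau$-cycle $(E_1,\ldots,E_{r_\lambda})$ of the mouth and set $X_k=E_{r_\lambda-k}$ for $k=1,\ldots,r_\lambda-1$; that is, take all mouth modules except one, ordered by decreasing index. Each $E_i$ is an indecomposable brick and, by the first bulleted property of $\TT_\lambda$, the mouth modules are pairwise orthogonal, so $\Hom_A(X_j,X_i)=0$ whenever $j\neq i$, which gives condition (1) of Definition \ref{ss}. For condition (2) I would compute the extensions by Auslander--Reiten duality inside the tube, $\Ext^1_A(E_a,E_b)\cong D\Hom_A(E_b,\tau E_a)=D\Hom_A(E_b,E_{a-1})$, which is nonzero exactly when $b\equiv a-1$. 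Thus $\Ext^1_A(X_j,X_i)\neq 0$ forces $X_i=\tau X_j$, i.e.\ $j=i-1$ in the chosen indexing, which never happens when $j\geq i$ (in particular the self-extensions vanish). Hence $(X_1,\ldots,X_{r_\lambda-1})$ is a stratifying system of size $r_\lambda-1$; omitting one mouth module is exactly what breaks the $\tau$-cycle and permits a compatible linear order.

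For the upper bound the key observation is that a stratifying system lying in a tube need not be a partial tilting module (already $\{E_1,\ldots,E_{r_\lambda-1}\}$ has self-extensions $\Ext^1_A(E_i,E_{i-1})\neq 0$), so I cannot feed $X$ directly into Proposition \ref{llegue2}. Instead I would pass to the characteristic module of the system. Since $\add \TT_\lambda$ is abelian and closed under extensions, every module filtered by the $X_i\in\add \TT_\lambda$ again lies in $\add \TT_\lambda$, so $\FF(X)\subseteq \add \TT_\lambda$. Applying the structural result already used in Lemma \ref{incli} (Theorem 2.4 of Marcos et al., 2004), I obtain $\FF(X)\cap\YY(X)=\add Y$, where $Y$ is an $A$-module with exactly $t$ pairwise non-isomorphic indecomposable summands and $\Ext^1_A(Y,Y)=0$. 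Because $Y\in\FF(X)\subseteq\add \TT_\lambda$ and the tube is hereditary, $\pd_A Y\leq 1$; hence $Y$ is a multiplicity-free partial tilting $A$-module all of whose summands lie in $\add \TT_\lambda$. Proposition \ref{llegue2} then bounds the number of such summands by $r_\lambda-1$, and since $Y$ has precisely $t$ of them, $t\leq r_\lambda-1$.

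The main obstacle I anticipate is conceptual rather than computational: recognizing that the system $X$ itself is generally not partial tilting, and that one must replace it by the $\Ext$-injective (characteristic) module $Y$, which has the same number of indecomposable summands but now satisfies the full $\Ext^1$-vanishing demanded by Proposition \ref{llegue2}. A secondary point to verify carefully is that the cited structure theorem producing $Y$ with exactly $t$ summands holds for the general algebra $A$ of the statement (the stratifying-system theory is not restricted to hereditary algebras), whereas the property $\pd_A Y\leq 1$ is extracted not from heredity of $A$ but from heredity of the tube together with $Y\in\add \TT_\lambda$. Once these two points are settled, the cone counting of Proposition \ref{llegue2}, which already tolerates nested and overlapping cones, finishes the bound.
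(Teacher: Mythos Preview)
Your proposal is correct and follows the same strategy as the paper: construct the stratifying system from all but one mouth module of the $\tau$-cycle, and for the bound pass from $X$ to the characteristic module $Y\in\FF(X)\cap\YY(X)$ (via Theorem~2.4 of Marcos et al.) and then invoke Proposition~\ref{llegue2}. In fact your justification is slightly more careful than the paper's: the paper cites Lemma~\ref{incli}, whose hypothesis is that $A=KQ$ is hereditary, whereas you correctly obtain $\pd_A Y\leq 1$ from the hereditary property of the tube itself, which is what the general statement actually requires.
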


\begin{proof}Let $(X_1, \ldots, X_{r_{\lambda}})$ be  a $\tau$-cycle of modules lying on the mouth of  $\TT_{\lambda}$.
Using the Auslander formula, we get  that if $1<i\leq j\leq r_{\lambda}-1$ then
$\Ext^1_A(X_i,X_j)\cong D\Hom_A(X_j, \tau X_i)\cong D\Hom_A(X_j,X_{i-1}).$
Since $j \not= i-1$ we see that  $\Hom_A(X_j,X_{i-1})=0$. Therefore, $\Ext^1_A(X_i,X_j)=0$. 
If $i=1$ and  $1\leq j\leq r-1$, again by the Auslander formula, we have 
$\Ext^1_A(X_1,X_j)\cong D\Hom_A(X_j,\tau X_1)\cong D\Hom_A(X_j,X_{r_{\lambda}}).$
Because $j\not = r$ it follows that $\Hom_A(X_j,X_{r})=0$. Thus $\Ext^1_A(X_1,X_j)=0$.
This implies that $X=(X_{r_{\lambda}-1},\ldots,X_{1})$ is a $s.s.$ of size $r_{\lambda}-1$. 

Now suppose that $Y=\{Y_{1},\ldots,Y_{t}\}$ is a $s.s.$ such that 
$Y_{i}\in \add \mathcal{T}_{\lambda}$. As $\add \TT_{\lambda}$ is  abelian and  extension closed subcategory 
of  $\mod A$ then $\FF(Y)\subseteq \add \mathcal{T}_{\lambda}.$
On the other hand, by \ref{incli}, $\FF(Y)\cap\YY(Y)=\add T$, where $T$ are a partial tilting $A$-module and then, by 
\ref{llegue2}, the number of indecomposable direct summands
of $T$ are at almost $r_{\lambda} -1$. Because the size of $X$ is equal to a number of indecomposable direct summands
of $T$ we have that $t \leq r_{\lambda}-1$.
\end{proof}

Let $Q$ be a finite, connected and acyclic quiver that is
not a Dynkin quiver. An algebra $B$ is called {\bf concealed of type Q } if there
exists a postprojective tilting module $T$ over the path algebra $A = KQ$ such
that $B =\End_A(T_A).$

Note that if $A=KQ$ and $Q$ is an Euclidean quiver then $A$ is a concealed algebra of type $Q$. We recall from 
(Simsom and Skowronski, 2007, Vol. 2) that, if  $B =\End_A(T_A)$ is a concealed algebra of Euclidean type then  
$\pd Z \leq 1$ and $\id Z \leq 1$, for all but finitely many non-isomorphic indecomposable $B$-modules $Z$ that are 
postprojective or preinjective. The category $\RR(B)$, whose objects are all the regular $B$-modules, is abelian and 
closed by extensions and the components of $\RR(B)$ are a family $\TT^{B}=\{\TT_{\lambda}^{B}\}_{\lambda \in \Lambda}$ 
of pairwise orthogonal standard stable and hereditary tubes. Moreover, if $r^{B}_{\lambda}\geq 1$ denotes the rank of 
$\TT^{B}_{\lambda}$ and $n$ is the rank of $K_0(A)$, then
$$\sum_{\lambda \in \Lambda} r^{B}_{\lambda}-1 \leq n-2.$$

Using the properties of concealed algebras mentioned above and \ref{regmax}
we have the following result.

\begin{theorem}\label{importante} Let $A$ be a concealed algebra of Euclidean type. If $X$ is a regular $s.s.$ of size 
$t$ over $A$ then $t \leq \rank K_{0}(A)-2$. In particular, if $A$ is a hereditary algebra of Euclidean type
there is no a regular  $c.s.s.$ over $A$.
\end{theorem}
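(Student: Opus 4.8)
The plan is to decompose a regular stratifying system according to the tube structure of the regular module category and then sum the tube-by-tube bounds coming from Proposition~\ref{regmax}, combining them with the global inequality on the ranks of the tubes that the excerpt recorded for concealed algebras of Euclidean type. First I would let $X=\{X_i\}_{i=1}^{t}$ be a regular $s.s.$ over the concealed algebra $A$ of Euclidean type. Since every $X_i$ is a regular indecomposable module, each $X_i$ lies in exactly one of the pairwise orthogonal standard stable hereditary tubes $\TT_{\lambda}^{A}$, $\lambda\in\Lambda$. This partitions $X$ as $X=\bigsqcup_{\lambda\in\Lambda} X^{(\lambda)}$, where $X^{(\lambda)}$ collects those summands lying in $\add\TT_{\lambda}^{A}$.

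The next step is to argue that each block $X^{(\lambda)}$ is itself a stratifying system whose elements lie in $\add\TT_\lambda^{A}$. This uses the orthogonality of distinct tubes: if $X_i\in\TT_\lambda^{A}$ and $X_j\in\TT_\mu^{A}$ with $\lambda\neq\mu$, then $\Hom_A(X_i,X_j)=0=\Ext^1_A(X_i,X_j)$ automatically, so the homological vanishing conditions of Definition~\ref{ss} for the full system $X$, when restricted to indices within a single tube, still hold with the inherited ordering. Hence $X^{(\lambda)}$ is a $s.s.$ contained in $\add\TT_\lambda^{A}$, and by the second assertion of Proposition~\ref{regmax} its size satisfies $|X^{(\lambda)}|\leq r^{A}_{\lambda}-1$.

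Summing over $\lambda$ then gives
\begin{equation*}
t=\sum_{\lambda\in\Lambda}|X^{(\lambda)}|\leq \sum_{\lambda\in\Lambda}\bigl(r^{A}_{\lambda}-1\bigr)\leq n-2,
\end{equation*}
where $n=\rank K_0(A)$ and the last inequality is precisely the bound $\sum_{\lambda}(r^{A}_{\lambda}-1)\leq n-2$ recalled in the excerpt for concealed algebras of Euclidean type. (Here I am implicitly using that a tube of rank $1$ contributes $r^{A}_{\lambda}-1=0$ and forces $|X^{(\lambda)}|=0$, so such tubes are harmless; only tubes of rank $\geq 2$ can carry regular relative simples.) This establishes $t\leq \rank K_0(A)-2$.

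For the final ``in particular'' claim, suppose $A$ is hereditary of Euclidean type; then $A=KQ$ is concealed of its own type, so the bound just proved applies with $n=\rank K_0(A)$. A $c.s.s.$ has size exactly $n$, but a regular $s.s.$ has size at most $n-2<n$, so no regular $c.s.s.$ can exist. The main obstacle I anticipate is the second step: carefully verifying that restricting a stratifying system to a single tube really does yield a stratifying system in the sense required by Proposition~\ref{regmax}. The homological conditions transfer cleanly thanks to orthogonality, but one must confirm that the restricted indexing still respects the ordering constraints (that $\Hom$ and $\Ext^1$ vanish for the appropriate pairs $j>i$ and $j\geq i$ \emph{within} the block), which follows since deleting indices from a valid ordering preserves all the required inequalities among the retained indices.
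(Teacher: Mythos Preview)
Your proposal is correct and follows exactly the approach the paper indicates: the paper's proof is a one-line remark (``Using the properties of concealed algebras mentioned above and \ref{regmax}''), and you have spelled out precisely that argument --- partition the regular $s.s.$ tube by tube, invoke Proposition~\ref{regmax} on each block, and sum against the inequality $\sum_\lambda(r^A_\lambda-1)\leq n-2$. Your verification that restricting to a single tube preserves the $s.s.$ conditions is the only detail to supply, and you have handled it correctly.
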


\section{ s.s. over wild hereditary algebras}

In this section we describe all the $c.s.s.$ over the Kronecker algebra and the generalized Kronecker
algebras, and give a caracterization of wild hereditary algebras in terms of stratifying systems. 

We begin by stating some known statements, which we will use. 

\begin{lemma}(Crawley-Boevey, 1993)\label{estender} Any exceptional sequence $(X_{1},\ldots,X_{a},Z_{1},\ldots,Z_{c})$
can be enlarged to a complete 
sequence $(X_{1},\ldots, X_{a},Y_{1}, \ldots, Y_{b},Z_{1},\ldots, Z_{c}).$
\finde
\end{lemma}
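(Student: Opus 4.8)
The goal is to enlarge an exceptional sequence $(X_1,\ldots,X_a,Z_1,\ldots,Z_c)$ to a \emph{complete} exceptional sequence by inserting modules $Y_1,\ldots,Y_b$ in the middle, where $b=n-a-c$ and $n$ is the number of isomorphism classes of simple $A$-modules. The natural approach is an induction on the codimension $n-(a+c)$: it suffices to show that any incomplete exceptional sequence can be lengthened by one module, inserted at any chosen gap, and then to iterate. So I would reduce the statement to the single-step claim: given an exceptional sequence of length $<n$, there is an exceptional module $Y$ which can be inserted at a prescribed position (here, immediately after $X_a$) so that the enlarged sequence is still exceptional.

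\textbf{Key steps.} First I would invoke the homological interpretation of exceptional sequences over a hereditary algebra $A=KQ$: since $\pd X\le 1$ for every module, the Euler form $\langle -,-\rangle$ on $K_0(A)\cong\mathbb{Z}^n$ is well defined and computes $\dim\Hom_A-\dim\Ext^1_A$ on dimension vectors. The perpendicular-category technique of Schofield and of Crawley-Boevey is the right tool: to the subsequence $(X_1,\ldots,X_a)$ one associates the right perpendicular category, and to $(Z_1,\ldots,Z_c)$ the left perpendicular category, and the modules available for insertion in the middle gap are precisely the exceptional objects of the intersection category $\CC$ consisting of all $M$ with $\Hom_A(X_i,M)=\Ext^1_A(X_i,M)=0$ for all $i$ and $\Hom_A(M,Z_j)=\Ext^1_A(M,Z_j)=0$ for all $j$. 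The second step is the structural fact that this perpendicular category is equivalent to the module category of a hereditary algebra whose rank equals $n-a-c=b$; in particular, since $a+c<n$, we have $b\ge 1$, so $\CC$ is nonzero and hereditary, hence contains at least one simple object, which is automatically exceptional. Inserting such an object $Y$ into the gap yields an exceptional sequence of length $a+1+c$, and the perpendicularity conditions guarantee exactly the required vanishing of $\Hom$ and $\Ext^1$ in the correct directions.

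\textbf{Iterating and the main obstacle.} Having produced one insertable module, I would iterate: after inserting $Y$, the new middle perpendicular category has rank $b-1$, and the same argument applies until the rank drops to $0$, at which point the sequence has length $n$ and is complete. The order in which the inserted $Y_1,\ldots,Y_b$ appear can be fixed so that they form an exceptional sequence among themselves, because they can be chosen successively as simple objects in a descending chain of perpendicular categories; this is where one checks that the accumulated $Y$'s satisfy the internal $\Hom$ and $\Ext^1$ conditions. The main obstacle is establishing the key structural input, namely that the perpendicular category of an exceptional sequence over a hereditary algebra is itself equivalent to modules over a hereditary algebra of the expected smaller rank; however, this is exactly the content of the cited work, so in our setting I would simply appeal to Crawley-Boevey's results rather than reprove them, and the remaining verification that the inserted simple objects assemble into a genuine exceptional sequence is a routine bookkeeping of the vanishing conditions.
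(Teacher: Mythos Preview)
The paper does not give a proof of this lemma: it is quoted from Crawley-Boevey (1993) and closed immediately with the $\square$ symbol, so there is no argument in the paper to compare against. Your sketch is essentially the perpendicular-category argument that underlies the cited result, so in spirit you are reproducing the source rather than diverging from it.

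Two small remarks on the details. First, with the paper's conventions ($\Hom_A(X_j,X_i)=0$ for $j>i$ and $\Ext^1_A(X_j,X_i)=0$ for $j\ge i$), a module $Y$ insertable between the $X$'s and the $Z$'s must satisfy $\Hom_A(Y,X_i)=\Ext^1_A(Y,X_i)=0$ and $\Hom_A(Z_j,Y)=\Ext^1_A(Z_j,Y)=0$; that is, $Y$ lies in the \emph{left} perpendicular ${}^{\perp}(X_1,\ldots,X_a)$ and in the \emph{right} perpendicular $(Z_1,\ldots,Z_c)^{\perp}$, the opposite of what you wrote. Second, the two-sided perpendicular category is not usually handled in one shot: one first passes to ${}^{\perp}(X_1,\ldots,X_a)\simeq \mod B$ with $B$ hereditary of rank $n-a$, observes that the $Z_j$ remain an exceptional sequence there (since $\Hom$ and $\Ext^1$ between objects of a perpendicular subcategory coincide with those in $\mod A$), and then takes $(Z_1,\ldots,Z_c)^{\perp}$ inside $\mod B$ to reach a hereditary category of rank $n-a-c=b$. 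With these adjustments your inductive scheme goes through.
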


\begin{lemma}(Crawley-Boevey, 1993)\label{unico} If $E=(X_{1},\dots,X_{n})$ and $F=(Y_{1},\dots,Y_n)$ are complete 
exceptional sequences such that  $X_j\cong Y_j$ for $j \not= i$, then $E=F$.
\finde
\end{lemma}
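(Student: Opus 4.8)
The plan is to pass to the Grothendieck group $K_0(A)\cong\mathbb{Z}^n$ equipped with the Euler bilinear form $\langle M,N\rangle=\dim_K\Hom_A(M,N)-\dim_K\Ext^1_A(M,N)$, which is well defined because $A=KQ$ is hereditary. The exceptional-sequence axioms translate directly into relations among the classes $[X_a]\in K_0(A)$: from $\Hom_A(X_a,X_b)=0$ and $\Ext^1_A(X_a,X_b)=0$ for $a>b$ one gets $\langle[X_a],[X_b]\rangle=0$ for $a>b$, while exceptionality gives $\langle[X_a],[X_a]\rangle=\dim_K\End_A(X_a)-\dim_K\Ext^1_A(X_a,X_a)=1$. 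Hence the Gram matrix $G=(\langle[X_a],[X_b]\rangle)_{a,b}$ is upper triangular with $1$'s on the diagonal, so $\det G=1$; in particular the classes $[X_1],\ldots,[X_n]$ are linearly independent and form a basis of $K_0(A)$ over $\mathbb{Q}$. The same applies to the $Y$'s.

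Since $[Y_j]=[X_j]$ for $j\ne i$, I would write $[Y_i]=\sum_{j=1}^{n}d_j[X_j]$ and pin down the coefficients using the remaining axioms of $F=(Y_1,\ldots,Y_n)$. For $a>i$ the relation $\langle[X_a],[Y_i]\rangle=\langle[Y_a],[Y_i]\rangle=0$ reads $d_a+\sum_{j>a}G_{aj}\,d_j=0$, and a downward induction starting at $a=n$ forces $d_a=0$ for every $a>i$. Symmetrically, for $a<i$ the relation $\langle[Y_i],[X_a]\rangle=\langle[Y_i],[Y_a]\rangle=0$ reads $d_a+\sum_{j<a}G_{ja}\,d_j=0$, and an upward induction starting at $a=1$ forces $d_a=0$ for every $a<i$. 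This leaves $[Y_i]=d_i[X_i]$, and then $\langle[Y_i],[Y_i]\rangle=d_i^{2}=1$ gives $d_i=\pm1$. Because $[X_i]$ and $[Y_i]$ are dimension vectors of nonzero modules, hence nonzero with nonnegative coordinates in the basis of simples, the value $d_i=-1$ is impossible; thus $d_i=1$ and $[X_i]=[Y_i]$.

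It then remains to upgrade this equality of classes to an isomorphism, and I expect that to be the delicate point, since a module is in general far from determined by its class in $K_0(A)$. Here, however, $X_i$ and $Y_i$ are exceptional, hence indecomposable, and $\langle[X_i],[X_i]\rangle=1$ says their common dimension vector is a positive real root of the Tits form; by Kac's theorem there is a unique indecomposable $A$-module, up to isomorphism, with a prescribed real-root dimension vector, so $X_i\cong Y_i$ and therefore $E=F$. A conceptual alternative that sidesteps Kac's theorem is the perpendicular calculus of Schofield and Geigle--Lenzing: the conditions on $Y_i$ (vanishing of $\Hom_A$ and $\Ext^1_A$ against $X_1,\ldots,X_{i-1}$ on one side and against $X_{i+1},\ldots,X_n$ on the other) say exactly that $X_i$ and $Y_i$ are exceptional objects of the perpendicular category ${}^{\perp}\{X_1,\ldots,X_{i-1}\}\cap\{X_{i+1},\ldots,X_n\}^{\perp}$, which is equivalent to the module category of a hereditary algebra with $n-(n-1)=1$ simple object and therefore contains a unique exceptional object up to isomorphism; as both $X_i$ and $Y_i$ lie in it, they must coincide.
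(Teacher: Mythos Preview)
The paper does not actually prove this lemma; it is quoted from Crawley-Boevey (1993) and closed immediately with the end-of-proof symbol, so there is no in-paper argument to compare against. Your proposal is a correct self-contained proof.

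A brief remark on the two routes you offer at the end. Your perpendicular-category argument is essentially Crawley-Boevey's original proof: one shows that $\{X_{i+1},\ldots,X_n\}^{\perp}$ is $\mod B$ for a hereditary $B$ with $i$ simples, that $X_1,\ldots,X_{i-1}$ remain an exceptional sequence there, and that ${}^{\perp}\{X_1,\ldots,X_{i-1}\}$ taken inside $\mod B$ is $\mod K$; since both $X_i$ and $Y_i$ land in this one-simple category by the axioms of $E$ and $F$, they coincide. Your first route, computing in $K_0(A)$ with the Euler form to force $[X_i]=[Y_i]$ and then invoking the uniqueness of an indecomposable with a given real-root dimension vector, is a genuine alternative; it trades the perpendicular calculus for a linear-algebra computation plus a nontrivial input (Kac's theorem, or the open-orbit argument for rigid modules), and it uses the standing hypothesis that $K$ is algebraically closed. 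Both are valid here; the perpendicular argument is closer to the cited source and does not need the algebraically closed assumption.
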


\begin{prop}\label{kro}Let $A=KQ$, where $Q$ is the quiver
\begin{displaymath}
\xymatrix  @!0 @R=3em @C=5pc{
2 \ar@/^1pc/[r]^{\alpha_1}_{\vdots } \ar@/_1pc/[r]_{\alpha_m}& 1}
\end{displaymath}
with $m\geq 2$. Then  the list of all  $c.s.s.$ over $A$ is the following:
\begin{enumerate}
\item $(I_2, P_1)$.
\item  $(\tau^{-i}P_1, \tau^{-i}P_2)$, with $i\geq 0$.
\item $(\tau^{-i}P_2 , \tau^{-i-1}P_1)$, with $i\geq 0$.
\item $(\tau^{i}I_1 , \tau^{i}I_2)$, with $i\geq 0$.
\item  $(\tau^{i+1}I_2 , \tau^{i}I_1)$, with $i\geq 1$.
\end{enumerate}
\end{prop}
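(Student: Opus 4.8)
The plan is to classify all complete stratifying systems (c.s.s.) over the generalized Kronecker algebra $A=KQ$ with $m\geq 2$ arrows by exploiting the fact that, over a hereditary algebra, a c.s.s. is precisely a complete exceptional sequence of length $n=2$. So I must list all exceptional pairs $(M,N)$, i.e.\ pairs of exceptional (indecomposable, rigid with $\End\cong K$) modules satisfying $\Hom_A(M,N)=0$ and $\Ext^1_A(M,N)=0$, up to the ordering conventions of Definition~\ref{ss}. First I would recall the module category of the generalized Kronecker algebra: its Auslander--Reiten quiver splits into the postprojective component $\{\tau^{-i}P_1,\tau^{-i}P_2\}_{i\geq 0}$, the preinjective component $\{\tau^{i}I_1,\tau^{i}I_2\}_{i\geq 0}$, and (for $m\geq 2$ the algebra is wild, or Kronecker-tame when $m=2$) a family of regular components. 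The crucial structural input is that for $m\geq 2$ \emph{no indecomposable regular module is exceptional}: a regular module $R$ has $\Ext^1_A(R,R)\neq 0$ (equivalently its dimension vector is not a real Schur root), so no exceptional module, hence no member of any exceptional sequence, can be regular. This immediately confines every c.s.s. to consist of postprojective and preinjective modules only.

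Next I would compute the relevant $\Hom$ and $\Ext^1$ spaces among the postprojective and preinjective indecomposables. Writing dimension vectors and using that $\langle \mathbf{a},\mathbf{b}\rangle = \dim\Hom_A(M,N)-\dim\Ext^1_A(M,N)$ is the Euler form with Cartan matrix determined by the $m$ arrows, together with the Auslander--Reiten formula $\Ext^1_A(M,N)\cong D\overline{\Hom}_A(N,\tau M)$ (exactly as used in the proof of \ref{regmax}), one shows that for two postprojective modules $\tau^{-i}P_a$ and $\tau^{-j}P_b$ one has $\Hom$ and $\Ext^1$ vanishing in a pattern governed entirely by the indices $i,j$ and the types $a,b\in\{1,2\}$; the same on the preinjective side, and the mixed case $\Hom_A(I,P)=0$ always while $\Ext^1$ between a preinjective and a postprojective is controlled by $\overline{\Hom}$ into the translate. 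The outcome of these finite computations is that the only exceptional pairs \emph{within} the postprojective component are the consecutive ones listed in items (2) and (3), the only ones \emph{within} the preinjective component are those in (4) and (5), and the only genuinely mixed exceptional pair is $(I_2,P_1)$ of item (1).

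To make the argument clean rather than a brute-force case check, I would instead lean on Lemma~\ref{estender} and Lemma~\ref{unico}. Any exceptional object $X$ can be completed to a complete exceptional sequence of length $2$ in essentially one way once one partner is fixed: Lemma~\ref{unico} says a complete exceptional sequence is determined by all but one of its terms, and Lemma~\ref{estender} guarantees completions exist. Thus the classification reduces to: (i) determine which indecomposables are exceptional (precisely the postprojective and preinjective ones, by the regular-rigidity obstruction above); and (ii) for each such $X$, determine the possible second coordinates. A convenient bookkeeping device is the natural $\mathbb{Z}$-action by the Auslander--Reiten translate $\tau$ on the postprojective and preinjective parts: the pairs in (2),(3) (resp.\ (4),(5)) are the $\tau^{\mp}$-orbits of the two ``shortest'' exceptional pairs, and the exceptional pair $(I_2,P_1)$ is the unique one straddling the two components. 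Checking that the listed families are pairwise distinct and that the index ranges ($i\geq 0$ in (2),(3),(4); $i\geq 1$ in (5)) are exactly those for which both the $\Hom$-vanishing and the $\Ext^1$-vanishing hold is a short verification of boundary cases.

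The main obstacle I expect is \emph{completeness}: proving that the five families \emph{exhaust} all c.s.s., not merely that each entry is one. Showing each listed pair is a c.s.s. is a routine $\Hom$/$\Ext^1$ computation, but ruling out every other pair requires a uniform argument that (a) no regular module occurs, and (b) among postprojectives and preinjectives the only surviving exceptional pairs are the consecutive/adjacent ones. Step (a) hinges on the non-existence of exceptional regular modules for $m\geq 2$, which I would justify via the real-Schur-root criterion or directly from $\Ext^1_A(E_i[j],E_i[j])\neq0$ for regular indecomposables in this wild (or tame Kronecker) setting; step (b) is where the $\tau$-orbit reduction combined with Lemma~\ref{unico} does the heavy lifting, collapsing infinitely many potential cases to the finitely many normal forms one must inspect.
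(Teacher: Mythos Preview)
Your approach is essentially the same as the paper's: rule out regular modules via self-extensions, verify the listed pairs by direct $\Hom$/$\Ext^1$ computations using the Auslander--Reiten formula, and obtain completeness from Lemmas~\ref{estender} and~\ref{unico}. The paper's completeness step is phrased slightly more crisply than yours---it simply observes that every postprojective and every preinjective indecomposable occurs as the \emph{first} entry of exactly one pair in the list, so uniqueness forces any c.s.s.\ to coincide with a listed one---but this is exactly the content of your step (ii).
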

\begin{proof} First we observe that independently of $m$ all the indecomposable regular modules over $A$ have self-extensions. 
In fact, let $M$ be a regular indecomposable $A$-module. If $m=2$ then $M$ lives in an homogeneous tube of $\Gamma (\mod A)$, 
so it has self extension, at least the one given by the almost split sequence. If $m>2$, as a consequence of 2.16 in (Simsom 
and Skowronski, 2007, Vol. 3, Chap. XVII), $M$ has non trivial self-extensions. So all stratifying systems consist only of 
posprojetive or preinjetive modules.

Just as an example, we will show that the pair (2) in the list is a $s.s.$. The other verifications are similar. Using 
the Auslander formula we have the isomorphism $\Hom_A(\tau^{-i}P_2, \tau^{-i}P_1) \cong \Hom_A( P_2, P_1).$
But $P_1$ is a simple projetive module and then $\Hom_A( P_2, P_1)=0$. On the other hand, using the Auslander formula again, we have that
$\Ext^{1}_{A}(\tau^{-i}P_2,\tau^{-i}P_1) \cong D\Hom_A(\tau^{-i}P_1,\tau^{-i+1}P_2) 
                                         \cong D\Hom_A(\tau^{-1}P_1,P_2)
                                         =0.$
Therefore  $(\tau^{-i}P_1,\tau^{-i}P_2)$, with $i\geq 0$, is a $c.s.s.$ over $A$.

Finally we must guarantee that the list is complete. Observe that all posprojective and all preinjective appear as the first entry on exactly 
one of the pairs described by the list, so the result follow by \ref{estender}.
\end{proof}

\begin{cor}Let $A$ be as in the previous theorem and $X$ be a $c.s.s.$ over $A$. The category $\FF(X)$ is infinite if, and only if, $X = (I_2, P_1)$.
\end{cor}
\begin{proof} Since $I_2=S_2$ and $P_1=S_1$ then $\FF(I_2, P_1)=\mod A$.  In the other cases, by  using  Lemma \ref{repete} can be shown that
$\FF(X_1,X_2)=\add (X_1\oplus X_2)$.
\end{proof}

We recall that if $Q$ is a finite connected acyclic quiver and $A = KQ$ then there exists a regular tilting module in $\mod A$ 
if, and only if, $Q$ is neither a Dynkin  or a Euclidean quiver, and $Q$ has at least three vertices (see (Baer, 1989)). Therefore, 
by \ref{importante} and \ref{kro}  we have the main result of this section which is stated in the following theorem.

\begin{theorem} Let $Q$ be a finite connected acyclic quiver, $|Q_0|\geq 3$ and $A = KQ$.
There exists a regular  $c.s.s$ over $A$ if, and only if, $A$ is a wild hereditary algebra.
\end{theorem}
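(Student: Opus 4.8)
The plan is to prove the biconditional by invoking the representation-type trichotomy for a connected hereditary algebra $A = KQ$: finite (Dynkin), tame (Euclidean), or wild. The backward implication will rest on the existence of a regular tilting module, and the forward implication on the nonexistence results already established.

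For sufficiency, I assume $A$ is wild with $n = |Q_0| \geq 3$. By the theorem of Baer recalled just above, $\mod A$ contains a regular tilting module; replacing it by a multiplicity-free version I may assume it is basic, so $T = \coprod_{i=1}^{n} T_i$ has exactly $n$ pairwise non-isomorphic indecomposable summands, all regular. Applying the proposition that every basic tilting $A$-module admits a reordering of its summands forming a complete stratifying system, I obtain a numbering making $(T_1,\ldots,T_n)$ a $c.s.s.$; since each $T_i$ is regular this is a regular $c.s.s.$, which proves existence.

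For necessity I argue by elimination. Suppose $X = \{X_i\}_{i=1}^{n}$ is a regular $c.s.s.$, so it has size $n$. First, $A$ cannot be of Dynkin type, since a hereditary algebra of Dynkin type has no regular indecomposable modules at all, whereas every $X_i$ is regular. Second, $A$ cannot be of Euclidean type: a hereditary algebra of Euclidean type is concealed of Euclidean type, so Theorem \ref{importante} bounds the size of any regular $s.s.$ by $\rank K_0(A) - 2 = n - 2$, contradicting the size $n$ of $X$. The only remaining possibility is that $A$ is wild, as required.

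The proof is essentially an assembly of the results established earlier, so the main difficulty has already been dispatched: the content of the backward direction lives in Baer's theorem, and that of the forward direction in Theorem \ref{importante}. The one point deserving comment is the indispensability of the hypothesis $|Q_0| \geq 3$; here Proposition \ref{kro} is decisive, for it shows that even the wild generalized Kronecker algebras (two vertices, $m \geq 3$ arrows) admit no regular $c.s.s.$, every $c.s.s.$ over them consisting of postprojective or preinjective modules, so the three-vertex hypothesis cannot be weakened.
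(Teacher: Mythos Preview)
Your proof is correct and follows essentially the same route as the paper: Baer's theorem together with the tilting-to-$c.s.s.$ proposition for the backward direction, and Theorem~\ref{importante} (plus the trivial Dynkin case) for the forward direction. Your treatment is in fact more explicit than the paper's one-line ``Therefore, by \ref{importante} and \ref{kro}\ldots'', and your closing remark correctly identifies the role of Proposition~\ref{kro} as showing the hypothesis $|Q_0|\geq 3$ is sharp rather than as an ingredient in the proof proper.
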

 
\section{Stratifying systems over algebras of type $\Delta(\widetilde{\mathbb{A}}_{p,q})$}
In this section  $p$ and $q$ are integers such that $1\leq p\leq q$. Let $\Delta=\Delta(\widetilde{\mathbb{A}}_{p,q})$
be the canonically  oriented  Euclidean quiver 
\begin{displaymath}
\xymatrix @!0 @R=3em @C=4pc{
& &  \ar[dl]  \stackrel{1}{\circ}& \stackrel{2}{\circ} \ar[l] & \ar[l]  \cdots & \ar[l] \stackrel{p-1}{\circ} \\
\Delta(\widetilde{\mathbb{A}}_{p,q}): & _{0}\circ    &           &       &     &&  \ar[ul] \ar[dl] \,\,\,\,\circ_{p+q-1}\\
& &  \ar[ul]  \stackrel{p}{\circ}&   \stackrel{p+1}{\circ}\ar[l] & \ar[l]  \cdots &  \stackrel{p+q-2}{\circ}\ar[l] 
  }
\end{displaymath}

We give a  construction of $s.s$ over $K\Delta$ with a maximal number of regular modules.  To start we need to recall the description of the 
simple regular representations of $K\Delta$, and their behavior under $\tau$,  which we give next.

\noindent{\bf { (a) Simple regular  representations in the tube $\mathcal{T}^{\Delta}_{\infty}$}}
 $$E_{i}^{(\infty)}=S_{i},\text{ com } 1\leq i \leq p-1,$$
 
\xymatrix  @!0 @R=2em @C=3pc {
                                                      & & & & &  \ar[dl] 0 & 0 \ar[l] & \ar[l]  \cdots & \ar[l] 0 \\
& &&E_{p}^{(\infty)}:& K &  &  & & &  \ar[ul] \ar[dl]^{1} K\\
                                                      & & && &  \ar[ul]^{1} K & K \ar[l]^{1} & \ar[l]^{1}  \cdots & K \ar[l]^{1}
  }

with the following properties: 
\begin{equation}\label{propriedades1}\tau E_{i+1}^{(\infty)}=E_{i}^{(\infty)},\text{ for } 1 \leq i \leq p-1, \,\,\,\,\,
\text{ and  } \,\,\,\,\,\tau E_{1}^{(\infty)}=E_{p}^{(\infty)}.
\end{equation}
  
\noindent {\bf { (b) Simple regular representations in the tube  $ \mathcal{T}^{\Delta }$}}
$$E_{j}^{(0)}=S_{p+j-1},\text{ for } 1\leq j \leq q-1$$
$$\xymatrix @!0 @R=2em @C=3pc {
 &&& &&   \ar[dl]_{1} K & K \ar[l]_{1} & \ar[l]_{1}  \cdots & \ar[l]_{1} K \\
  &&&E_{q}^{(0)} :&K &  &  & & &  \ar[ul]_{1} \ar[dl] K,\\
 &&&& &  \ar[ul] 0 & 0 \ar[l] & \ar[l] \cdots & 0 \ar[l]\\
  }$$
  
 with the following properties: 
\begin{equation}\label{propriedades2}
\tau E_{j+1}^{0}=E_{j}^{0},\text{ for } 1 \leq j \leq q-1,\,\,\,\,\,\text{ and}\,\,\,\,\,\tau E_{1}^{0}=E_{q}^{0}.
\end{equation}

\noindent{\bf{(c) Simple regular representations in the tube  $\mathcal{T}^{\Delta}_{\lambda}$, 
with $\lambda \in K \setminus \{0\}$}}\newline
\begin{displaymath}
\xymatrix @!0 @R=2em @C=3pc {
  & &  \ar[dl]_{\lambda} K & K \ar[l]_{1} & \ar[l]_{1}  \cdots & \ar[l]_{1} K \\
 E^{(\lambda)} :&K &  &  & & &  \ar[ul]_{1} \ar[dl]_{1} K\\
 & &  \ar[ul]_{1} K & K \ar[l]_{1} & \ar[l]_{1} \cdots & K \ar[l]_{1}\\
  }
\end{displaymath}
\vspace{0.3cm}

The following theorem  give us information about the tubular components of $\Gamma (\mod K\Delta)$. We will use 
it in order to construct the $s.s.$ of our interest.

\begin{theorem}(2.5 in Simsom and Skowronski, 2007, Vol. 2, Chap. XII) Assume that  $A=K\Delta$. Then
every component in the regular  part $\RR(A)$ of $\Gamma(\mod A)$ is one of the following stable tubes:
\begin{enumerate}
\item the tube  $\mathcal{T}^{\Delta}_{\infty}$ of rank $p$ containing the $A$-modules 
 $E_{1}^{(\infty)},\ldots,E_{p}^{(\infty)}$.
\item the tube $\mathcal{T}^{\Delta}_{0}$ of rank $q$
containing the $A$-modules  $E_{1}^{(0)}, \ldots, E_{q}^{(0)}$.
\item  the tube  $\mathcal{T}^{\Delta}_{\lambda}$ of rank $1$
containing the $A$-modules  $E^{(\lambda)}$, with $\lambda\in K\setminus \{0\}.$
\end{enumerate}
Where  $E_{j}^{(\infty)}$, $ E_{i}^{(0)}$ and  $E^{(\lambda)}$ are the simple regular  $A$-modules previously defined.
\end{theorem}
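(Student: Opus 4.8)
The plan is to treat this as the classification of the regular components of a tame hereditary algebra and then to pin down the explicit data (the ranks $p$, $q$ and the simple regular modules) by direct computation with dimension vectors. First I would observe that $\Delta$ is a finite, connected, acyclic quiver whose underlying graph is the cycle $\widetilde{\mathbb{A}}_{p+q-1}$ on $n=p+q$ vertices; hence $A=K\Delta$ is hereditary, and its Tits form is positive semidefinite with one-dimensional radical, so $A$ is a tame (Euclidean) hereditary algebra. By the structure theory of such algebras (Dlab--Ringel; see also the reference Simson and Skowronski, 2007, Vol.~2), the regular part $\RR(A)$ of $\Gamma(\mod A)$ is a sincere, separating family $\{\TT_{\lambda}\}_{\lambda\in\mathbb{P}^1(K)}$ of pairwise orthogonal, standard, stable tubes, whose ranks $r_{\lambda}$ satisfy $\sum_{\lambda}(r_{\lambda}-1)=n-2$. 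This already fixes the global shape of the answer; what remains is to exhibit the non-homogeneous tubes and to match their ranks with $p$ and $q$.

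Second, I would use that for an indecomposable non-projective module $X$ one has $\ldim\,\tau X=\Phi\cdot\ldim\,X$, where $\Phi$ is the Coxeter transformation of $A$. With this I would verify directly that the modules $E_{1}^{(\infty)},\ldots,E_{p}^{(\infty)}$ of part (a) are bricks (each with endomorphism ring $K$), are regular (their defect $\partial$ vanishes, so they are neither postprojective nor preinjective), and satisfy the cyclic relations $\tau E_{i+1}^{(\infty)}=E_{i}^{(\infty)}$ and $\tau E_{1}^{(\infty)}=E_{p}^{(\infty)}$ recorded in (\ref{propriedades1}). Since the orthogonal bricks on a common $\tau$-orbit of length $p$ form the mouth of a single stable tube, this produces $\TT^{\Delta}_{\infty}$ of rank $p$. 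The identical computation applied to $E_{1}^{(0)},\ldots,E_{q}^{(0)}$, using (\ref{propriedades2}), produces the tube $\TT^{\Delta}_{0}$ of rank $q$.

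Third, for each $\lambda\in K\setminus\{0\}$ I would check that $E^{(\lambda)}$ has dimension vector equal to the positive generator $\delta$ of the radical of the Tits form (the null root, with a $1$ at every vertex), is a brick, and is $\tau$-periodic of period one, i.e. $\tau E^{(\lambda)}\cong E^{(\lambda)}$; hence it sits on the mouth of a homogeneous (rank $1$) tube. Distinct scalars give non-isomorphic representations, as $\lambda$ is recovered as the structure constant on the distinguished arrow, so these modules account for the points of $\mathbb{P}^1(K)\setminus\{0,\infty\}$. The counting then closes the argument: the two tubes already found contribute $(p-1)+(q-1)=n-2$ to $\sum_{\lambda}(r_{\lambda}-1)$, which exhausts the total available by the structure theorem, so no further non-homogeneous tube can occur; every remaining $\TT_{\lambda}$ is homogeneous and equals exactly one of the $E^{(\lambda)}$.

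The main obstacle is the explicit identification rather than the abstract classification: one must genuinely verify that the representations written in (a), (b) and (c) are regular simple (bricks lying on the mouth), compute their $\tau$-orbits through $\Phi$, and confirm that the resulting ranks are precisely $p$ and $q$ and not some other partition adding up correctly. Once these dimension-vector computations and the orthogonality of the three families are in place, the separating-family structure theorem together with the corank-one counting force the displayed list to be complete.
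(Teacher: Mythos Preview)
The paper does not supply a proof of this theorem at all: it is stated with the attribution ``(2.5 in Simsom and Skowronski, 2007, Vol.~2, Chap.~XII)'' and then immediately used, so there is nothing to compare your argument against within the paper itself. Your sketch is nonetheless a correct outline of the standard proof (essentially the one in the cited reference): invoke the structure theorem for the regular part of a Euclidean hereditary algebra as a $\mathbb{P}^1(K)$-family of standard stable tubes with $\sum_{\lambda}(r_{\lambda}-1)=n-2$, verify by direct computation (via the Coxeter matrix or by inspection of the explicit representations) that the listed modules form $\tau$-cycles of lengths $p$ and $q$ on the mouths of two tubes, and use the counting identity $(p-1)+(q-1)=n-2$ to force every remaining tube to be homogeneous, with the $E^{(\lambda)}$ exhausting those mouths. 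The only point I would flag is that your ``obstacle'' paragraph is right to emphasize the concrete verifications: in particular you should check that each $E_i^{(\infty)}$, $E_j^{(0)}$, $E^{(\lambda)}$ is a brick with no nontrivial regular submodule (so it is simple in $\add\TT$), and that $E^{(\lambda)}\not\cong E^{(\mu)}$ for $\lambda\neq\mu$; these are routine but must actually be carried out for the argument to be complete.
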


We use the simple regular modules in the tubes $\mathcal{T}^{\Delta}_{\infty}$ and
$\mathcal{T}^{\Delta}_{0}$ to build a $s.s.$ of  size $p+q-2$ over $K\Delta$.
Let $F_i$ denote the module $E_{p-i}^{(\infty)}$, for $i=1,\ldots,p-1$, and let $G_i$ denote the module $E_{q-i}^{(0)}$, for $i=1,\ldots,q-1$. 
Then we have that
$$\tau F_i=F_{i+1}, \text{ for } 1 \leq i \leq p-2, \tau F_{p-1}=E_{p}^{(\infty)} \text{ and } \tau E_p= F_1 ,$$
$$\tau G_i= G_{i+1}, \text{ for  } 1 \leq i \leq q-2, \tau E_q=G_{1} \text{ and }  \tau E_q=G_{1}.$$

The sequence  $(F_1, \ldots,F_{p-1})$ will be denoted by $F$ and the set 
$\{F_1, \ldots,F_{p-1}\}$ by $\FF.$ Analogously, $(G_1, \ldots,G_{q-1})$ will be denoted by $G$
and $\{G_1,\ldots, G_{q-1} \}$ by $\GG$. We claim that $F$ and $G$ are $s.s.$.

Let $S$ be a set of modules and let $M$ be a module. For simplicity  we write $\Hom_A(S,M)=0$ when $\Hom_A(X,M)=0$, 
for all $X \in S$. If $S$ is a set of regular modules, we denote  by $\tau^{i}(S)$, with $i\in\mathbb{Z}, i\not= 0$, the set 
$\tau^{i}(S)=\{\tau^{i} X, X \in S\},$ where $\tau$ is the Auslander-Reiten translation.

 Given a finite dimensional $K$-algebra, let $\{S_1, \ldots, S_n\}$ be a complete  set of 
the isomorphism classes of simple  modules, let $M$ be a module and let $\XX$ be a set 
of modules. The support of $M$ is the set $\supp\,M= \{i \in \{1, \ldots, n\}: [M:S_i] \neq 0\}.$
The support of $\XX$ is the set $\Supp\, \XX=\{i \in \{1, \ldots, n\}: [M:S_i] \neq 0, M \in \XX\}.$

\begin{lemma}\label{suporte}Let  $n\in \mathbb{N}$, $n\not =0$. Then:
\begin{enumerate}
\item 
      \begin{enumerate}
       \item If $p|n$, then $\Supp \tau^n \FF = \{1, \ldots, p-1 \}=\Supp \FF.$
       \item If   $n\equiv r \,\,(\mod \,p)$ and $ 1 \leq r \leq p-1$, then 
           \[ \begin{array}{lll} 
           \Supp \tau^{n}\FF =\tau^{r}\FF&=&\{0,\ldots, p+q-1 \} \setminus \{p-r\}\\
           \Supp \tau^{-n}\FF= \tau^{-r}\FF  &=& \{0,\ldots,p+q-1 \} \setminus \{r\}.\\
           \end{array}\]
       \end{enumerate}
\item  
       \begin{enumerate}
       \item If $q|n$, then $\Supp \tau^n \GG = \GG= \{p, \ldots, p+q-2 \}.$
       \item If  $n\equiv r \,\,(\mod \,q)$ and $ 1 \leq r \leq q-1$, then 
        \[  \begin{array}{rcl}    
        \Supp \tau^{n}\GG =\tau^{r}\GG&=& \{0, \ldots, p+q-1 \} \setminus \{p+q-r-1\}\\
        \Supp \tau^{-n}\GG=\tau^{-r}\GG &=& \{ 0,\ldots,p+q-1 \} \setminus \{p+r-1\}.\\
      \end{array}\]
     \end{enumerate}
 \end{enumerate}
 \end{lemma}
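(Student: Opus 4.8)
The plan is to reduce everything to the cyclic $\tau$-action on the mouth of each tube and then simply read off supports.

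First I would exploit that the tube $\TT_\infty^\Delta$ has rank $p$: the translation $\tau$ restricts to a bijection of the simple regular modules with $\tau E_j^{(\infty)}=E_{j-1}^{(\infty)}$ (indices taken modulo $p$, so $E_0^{(\infty)}=E_p^{(\infty)}$), whence $\tau^p$ is the identity on $\add\TT_\infty^\Delta$. Therefore $\tau^n\FF=\tau^r\FF$ whenever $n\equiv r\pmod p$, and the same holds with $q$ for $\GG$. This collapses each statement to a single residue $r$, and the case $p\mid n$ (resp. $q\mid n$) is literally $\tau^n\FF=\FF$ (resp. $\tau^n\GG=\GG$), which is part (a).

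Next I would record the supports of the simple regulars directly from their displayed representations: $\supp E_i^{(\infty)}=\{i\}$ for $1\le i\le p-1$ while $\supp E_p^{(\infty)}=\{0,p,p+1,\dots,p+q-1\}$, and dually $\supp E_j^{(0)}=\{p+j-1\}$ for $1\le j\le q-1$ while $\supp E_q^{(0)}=\{0,1,\dots,p-1,p+q-1\}$. The structural fact I need is that within $\TT_\infty^\Delta$ these supports are pairwise disjoint and cover the whole vertex set $\{0,\dots,p+q-1\}$; in particular each vertex $i\in\{1,\dots,p-1\}$ occurs only in $\supp E_i^{(\infty)}$, and every vertex of $\{0,p,\dots,p+q-1\}$ occurs only in $\supp E_p^{(\infty)}$. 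The analogous disjointness and covering hold in $\TT_0^\Delta$.

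With this in hand the computation is bookkeeping. By definition $\FF=\{E_1^{(\infty)},\dots,E_{p-1}^{(\infty)}\}$ is the set of all simple regulars of $\TT_\infty^\Delta$ with the single module $E_p^{(\infty)}$ removed. Since $\tau^r$ merely rotates the full $\tau$-cycle, $\tau^r\FF$ is again all-but-one of the simple regulars, and from $\tau^r E_j^{(\infty)}=E_{j-r}^{(\infty)}$ the omitted one is $E_{p-r}^{(\infty)}$; for $1\le r\le p-1$ this equals the simple $S_{p-r}$, of support $\{p-r\}$. As that vertex lies in no other simple regular's support, deleting $E_{p-r}^{(\infty)}$ removes exactly the vertex $p-r$ from the full union, giving $\Supp\tau^n\FF=\{0,\dots,p+q-1\}\setminus\{p-r\}$. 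Replacing $\tau^r$ by $\tau^{-r}$ omits instead $E_r^{(\infty)}=S_r$ and yields the complement of $\{r\}$, which is the second line of 1(b). The two lines of 2(b) come out the same way: $\tau^r\GG$ omits $E_{q-r}^{(0)}=S_{p+q-r-1}$ and $\tau^{-r}\GG$ omits $E_r^{(0)}=S_{p+r-1}$, so their supports are the full set minus $\{p+q-r-1\}$ and minus $\{p+r-1\}$ respectively. The only place demanding care is the index arithmetic running through the two relabellings $F_i=E_{p-i}^{(\infty)}$, $G_i=E_{q-i}^{(0)}$ together with the opposite sign conventions for $\tau^{r}$ and $\tau^{-r}$; there is no genuine obstacle beyond checking that for $1\le r\le p-1$ (resp. $1\le r\le q-1$) the omitted module is always one of the simple modules $S_\bullet$ and never the ``large'' module $E_p^{(\infty)}$ (resp. $E_q^{(0)}$), so that a single vertex — rather than the whole block $\{0,p,\dots,p+q-1\}$ — is what disappears.
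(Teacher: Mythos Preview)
Your argument is correct and is exactly the computation the authors have in mind: the paper states this lemma without proof, relying on the explicit descriptions of the $E_i^{(\infty)}$, $E_j^{(0)}$ and the $\tau$-relations (\ref{propriedades1}), (\ref{propriedades2}) displayed just before. Your observation that the supports of the mouth modules in each tube partition $\{0,\dots,p+q-1\}$, together with the periodicity of $\tau$, is precisely what makes the bookkeeping go through; there is nothing to add.
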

\begin{lemma}\label{repete}Given a hereditary algebra. 
\begin{enumerate} 
\item If $P_j$ and $P_m$ are  projective indecomposable modules then
$\Ext^1(\tau^{-t} P_j, \tau^{-t-r} P_m)=0,\text{ for all } t, \,r\geq 0.$
\item If $ I_j$ and $I_m$ are injective indecomposable $A$-modules then 
     $\Ext^1(\tau^t I_j, \ \tau^{t-r} I_m)=0,\text{ for all } t\geq r\geq 0.$
\end{enumerate}
\end{lemma}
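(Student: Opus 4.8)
The plan is to prove the two statements dually, reducing each $\Ext^1$ to a $\Hom$-space that vanishes for a clean structural reason. Throughout I would use the Auslander--Reiten formula in the form $\Ext^1_A(M,N)\cong D\Hom_A(N,\tau M)$ (valid over a hereditary algebra once $M$ has no projective direct summand, exactly as in the proofs of \ref{regmax} and \ref{kro}), together with the shift isomorphism $\Hom_A(X,Y)\cong\Hom_A(\tau X,\tau Y)$ for $X,Y$ without projective summands, and dually $\Hom_A(X,Y)\cong\Hom_A(\tau^{-1}X,\tau^{-1}Y)$ for $X,Y$ without injective summands.

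The key vanishing fact I would isolate first is that over a hereditary algebra $\Hom_A(M,P)=0$ whenever $M$ is indecomposable and non-projective and $P$ is projective. Indeed, any $f\colon M\to P$ has image a submodule of a projective module, hence projective since $A$ is hereditary; the corestriction $M\twoheadrightarrow\Im f$ then splits because $\Im f$ is projective, exhibiting $\Im f$ as a direct summand of the indecomposable module $M$. As $M$ is non-projective this forces $\Im f=0$, so $f=0$. Dualizing (quotients of injectives are injective over a hereditary algebra, and a monomorphism from an injective splits) yields $\Hom_A(I,N)=0$ whenever $I$ is injective and $N$ is indecomposable and non-injective.

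For part (1), if $t=0$ then $\Ext^1_A(P_j,\tau^{-r}P_m)=0$ because $P_j$ is projective. If $t\geq 1$, the module $\tau^{-t}P_j$ has no projective summand, so the Auslander--Reiten formula gives
\[
\Ext^1_A(\tau^{-t}P_j,\tau^{-t-r}P_m)\cong D\Hom_A(\tau^{-t-r}P_m,\tau^{-(t-1)}P_j).
\]
Applying the shift $\tau^{t-1}$ --- legitimate because every term to which $\tau$ is applied along the way is non-projective --- turns the right-hand side into $D\Hom_A(\tau^{-(r+1)}P_m,P_j)$, which vanishes by the fact above, since $\tau^{-(r+1)}P_m$ is indecomposable and non-projective and $P_j$ is projective. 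Part (2) I would settle in exactly the dual manner: if $t=r$ then $\tau^{t-r}I_m=I_m$ is injective, so $\Ext^1_A(\tau^t I_j,I_m)=0$; and if $t>r$ then $\tau^t I_j$ has no projective summand, and the Auslander--Reiten formula followed by the shift $\tau^{-(t-r)}$ reduces $\Ext^1_A(\tau^t I_j,\tau^{t-r}I_m)$ to $D\Hom_A(I_m,\tau^{r+1}I_j)$, which is $0$ because $I_m$ is injective and $\tau^{r+1}I_j$ is indecomposable and non-injective.

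The main obstacle is purely the boundary bookkeeping: one must check at each step that the Auslander--Reiten formula and the shift isomorphisms are applicable, i.e. that the modules to which $\tau$ (resp. $\tau^{-1}$) is applied carry no projective (resp. injective) summand. This is precisely why the degenerate cases $t=0$ in (1) and $t=r$ in (2) are peeled off and handled by projectivity/injectivity directly, and why the hypothesis $t\geq r$ in (2) is exactly what is needed to keep every exponent non-negative so that all the intermediate modules remain preinjective.
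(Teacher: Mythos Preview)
Your proof is correct and follows essentially the same route as the paper: both apply the Auslander--Reiten formula together with the $\tau$-shift to reduce $\Ext^1(\tau^{-t}P_j,\tau^{-t-r}P_m)$ to $D\Hom(\tau^{-(r+1)}P_m,P_j)$, and then argue this vanishes. The paper is terser --- it performs the reduction in one line and justifies the vanishing by saying that otherwise $P_j$ would have a non-projective predecessor in $\Gamma(\mod A)$ --- whereas you separate out the boundary cases $t=0$ and $t=r$ explicitly and give a self-contained argument (image in a projective is projective, hence splits) for the same vanishing; these are cosmetic differences only.
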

      
\begin{proof} We just prove the statement (1) because the proof of (2) is similar. By the Auslander
formula we have that
$\Ext^{1}(\tau^{-t}P_{j},\tau^{-t-r}P_{m}) \cong  D\Hom(\tau^{-r-1}P_{m},P_{j}).$
But $\Hom(\tau^{-r-1}P_{m},P_{j})=0$, otherwise $P_{j}$ would have a non-projective predecessor in 
$\Gamma(\mod A)$.
\end{proof}

\begin{prop}\label{posprojetivo1} If $Y$ is a posprojective $A$-module such that  $(F,G,Y)$ is a $s.s.$, then 
$Y$ is one of the modules in the following list:
\begin{enumerate}
\item $P_0$.
\item $P_{p+q-1}$.
\item $\tau^{-t}P_0$, with $t\geq 1$ such that  $p|t$ and $q|t$.
\item $\tau^{-t} P_{p+q-1}$, with  $t\geq 1$ such that $p|t$ and $q|t$.
\item $\tau^{-t}P_{p-r}$, with $t\geq 1$ such that $q|t$, $t\equiv r \,\,(\mod \,p)$ and $ 1\leq r \leq p-1$. 
\item $\tau^{-t}P_{p+q-r-1}$, with $t\geq 1$ such that $p|t$, $t\equiv r \,\,(\mod \,q)$ and $1 \leq r \leq q-1$.
\end{enumerate}
\end{prop}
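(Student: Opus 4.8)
The plan is to read off, directly from Definition \ref{ss}, exactly which vanishing conditions the module $Y$ must satisfy once it is placed in the last position of $(F,G,Y)$, and then to translate these into the support conditions that Lemma \ref{suporte} already evaluates. Write $Y=\tau^{-t}P_j$ with $t\ge 0$ and $0\le j\le p+q-1$; every indecomposable postprojective module has this form. Since $Y$ carries the largest index, the only constraints beyond those internal to $(F,G)$ are $\Hom_A(Y,F_k)=0$ and $\Hom_A(Y,G_k)=0$ for all $k$, together with $\Ext^1_A(Y,F_k)=\Ext^1_A(Y,G_k)=\Ext^1_A(Y,Y)=0$.

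First I would dispose of the extension conditions, showing that they are automatic and so impose nothing. For $t=0$ the module $Y=P_j$ is projective, so every outgoing $\Ext^1$ vanishes. For $t\ge 1$ the Auslander--Reiten formula gives $\Ext^1_A(\tau^{-t}P_j,F_k)\cong D\Hom_A(F_k,\tau^{-(t-1)}P_j)$, and this is zero because $\tau^{-(t-1)}P_j$ is postprojective while $F_k$ is regular, and there are no nonzero maps from a regular module to a postprojective one; the same argument handles $G_k$, and $\Ext^1_A(Y,Y)=0$ because an indecomposable postprojective module is exceptional. Hence only the two Hom conditions are binding.

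Next I would convert those Hom conditions into support conditions. Using the isomorphism $\Hom_A(\tau^{-t}P_j,E)\cong\Hom_A(P_j,\tau^{t}E)$ (the same shift of both arguments by the Auslander formula that is used in the proof of Proposition \ref{kro}, legitimate here since $E$ is regular and $\tau^{-t}P_j$ is postprojective) together with $\Hom_A(P_j,M)\cong M_j$, one gets that $\Hom_A(Y,E)\neq 0$ exactly when $j\in\supp(\tau^{t}E)$. Therefore the requirements $\Hom_A(Y,F_k)=0$ and $\Hom_A(Y,G_k)=0$ for all $k$ become
$$j\notin\Supp\,\tau^{t}\FF\qquad\text{and}\qquad j\notin\Supp\,\tau^{t}\GG.$$

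Finally I would run the case analysis on the divisibility of $t$ by $p$ and $q$, reading the two excluded supports off Lemma \ref{suporte}. If $p\mid t$ and $q\mid t$ the excluded sets are $\{1,\dots,p-1\}$ and $\{p,\dots,p+q-2\}$, whose common complement in $\{0,\dots,p+q-1\}$ is $\{0,p+q-1\}$, yielding $P_0,P_{p+q-1}$ (items (1),(2)) for $t=0$ and their shifts (items (3),(4)) for $t\ge 1$. If $q\mid t$ but $p\nmid t$ with $t\equiv r\pmod p$, the first condition forces $j=p-r$ while the second excludes $\{p,\dots,p+q-2\}$, giving item (5); symmetrically $p\mid t$, $q\nmid t$ gives item (6). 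If neither divides $t$, the first condition forces $j=p-r_1\in\{1,\dots,p-1\}$ and the second forces $j=p+q-r_2-1\in\{p,\dots,p+q-2\}$, which are incompatible, so no such $Y$ exists. The main point requiring care is exactly this last step: checking in each case that the single index excluded by one tube lies on the correct side of the interval excluded by the other, so that the two support conditions are simultaneously satisfiable precisely in the listed cases; since the Ext conditions turned out vacuous, the same analysis also shows the listed modules are genuinely the postprojective completions, though only the stated necessity is needed here.
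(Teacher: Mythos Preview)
Your proof is correct and follows essentially the same approach as the paper: both dispose of the $\Ext^1$ conditions first (the paper uses the form $\Ext^1_A(\tau^{-t}P_j,M)\cong D\Hom_A(\tau^{-1}M,\tau^{-t}P_j)$ uniformly for all $t\ge 0$, while you split off $t=0$ and use the other form of the Auslander--Reiten formula, but this is cosmetic), then translate the remaining Hom conditions into the support criterion $j\notin\Supp\tau^t\FF\cup\Supp\tau^t\GG$ and run exactly the same five-case analysis via Lemma \ref{suporte}.
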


\begin{proof} If $M$ is a regular module, then
$\Ext^{1}_{A}(\tau^{-t}P_j,M)\cong D\Hom_A(\tau^{-1}M,\tau^{-t}P_j)=0, \text{ for } t\geq 0.$
Because of that, to prove that  $(F,G,\tau^{-t}P_j)$ is a $s.s.$ we need to show that 
$\Hom_A(\tau^{-t}P_j,\FF)=0= \Hom_A(\tau^{-t}P_j, \GG).$
On the other hand, as a consequence of Auslander formula, we have that
$$\Hom_A(\tau^{-t}P_j, \FF)\cong \Hom_A(P_j, \tau^{t}\FF) \text{ and } \Hom_A(\tau^{-t}P_j,\GG) \cong \Hom_A(P_j,\tau^{t}\GG).$$
Therefore,
$$\Hom_A(P_j, \tau^{t}\FF)=0 \text{ if, and only if, } j \notin \Supp\tau^{t}\FF \text{ and }$$
$$\Hom_A(P_j, \tau^{t}\GG)=0 \text { if, and only if,   }j \notin \Supp\tau^{t}\GG.$$
It follows that 
$(F,G,\tau^{-t}P_j)\text{  is a } s.s. \text { if, and only if,  } j \in (\Supp\tau^{t}\GG)' \cap (\Supp\tau^{t}\FF)',$  where 
$(\Supp\tau^{t}\GG)'$ denotes the complement of the set
$(\Supp\tau^{t}\GG)$ with relation to the set $\{ 0, \ldots, p+q-1\}$.
\vspace{0.1cm}

\noindent {\bf Case 1.} If $t=0$, then $Y\cong P_j$ and $(F,G,P_j)$ is a $s.s.$ if, and only if, 
$j \in (\Supp\,\FF)' \cap (\Supp\,\GG)'=\{0, p+q-1 \}$.

\noindent {\bf Case 2.} If $t$ is such that $p|t$ e $q|t$ then, according to \ref{suporte},
$\Supp\,\tau^{t}\FF= \Supp\,\FF$ and $\Supp\,\tau^{t}\GG= \Supp\,\GG.$
Therefore, $(F,G,\tau^{-t}P_j)$ is a $s.s$ if, and only if, $j \in \{0,p+q-1 \}$.

\noindent {\bf Case 3.} If $t$ is such that $q|t$ and   $t \equiv r\,\,(\mod\,p)$, with 
 $1 \leq r \leq p-1$, then by \ref{suporte} we have that 
$(\Supp\,\tau^{t}\FF)' =\{p-r\}  \text{ and }$
$(\Supp\,\tau^{t}\GG)' =\{0, \ldots, p-1\}\cup \{p+q-1\}.$
Therefore, in this case,  $(F,G,\tau^{-t}P_j)$ is a $s.s.$ if, and only if, $j=p-r$.

\noindent{\bf Case 4.} If $t$ is such that $p|t$ and  $q\not| t$, then  similarly to the previous case we conclude that
$(F,G,\tau^{-t}P_j)$ is a $s.s.$ if, and only if, $j=p+q-r-1$, where $r$ is such that 
 $t \equiv r \,\,(\mod \,q)$ and $1 \leq r \leq q-1$.

\noindent{\bf Case 5.} If $t \equiv r_1\,\,(\mod \,p)$ and  $t\equiv r_2\,\,(\mod\,q)$, with $0< r_1 <p$ and 
$0<r_2<q$  then $(\Supp\, \tau^{r_1}\FF)' \cap (\Supp \tau^{r_2} \GG)'=\{p-r_1\} \cap \{p+q-r_2-1\}= \phi.$
Therefore there is no a $s.s.$  with these conditions for $t$.
\end{proof}
\begin{prop}\label{preinjetivo}If $Y$ is a preinjective $A$-module such that $(F,G,Y)$ is a $s.s.$, then
$Y$ is one of the modules in the following list
\begin{enumerate}
\item $\tau^{t}I_{p}$, with $t\geq 1$ such that   $t\equiv p-1\,\, (\mod \,p)$ and  $q|t$.
\item $\tau^{t}I_1$, with $t\geq 1$ such that  $t\equiv q-1\,\, (\mod \,q)$  and $p|t$. 
\item $\tau^{t}I_{0}$, with  $t\geq 1$ such that $t\equiv p-1 \,\,(\mod \,p)$  and  $t\equiv q-1 \,\,(\mod \,q)$.
\item $\tau^{t}I_{p+q-1}$, with  $t\geq 1$ such that $t\equiv p-1 \,\,(\mod \,p)$  and  $t\equiv q-1 \,\,(\mod \,q)$.
\item $\tau^{t}I_{r+1}$, with $t\geq 1$ such that $t\equiv r\,\, (\mod \,p)$, $r\neq p-1$  and  $t\equiv q-1\,\,(\mod \,q)$.
\item $\tau^{t}I_{p+r}$, with $t\geq 1$ such that $t\equiv r\,\,(\mod \,q)$, $r\neq q-1$  and  $t\equiv p-1\,\,(\mod \,p)$.

\end{enumerate}

\end{prop}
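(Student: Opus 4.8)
The plan is to mirror the argument of Proposition~\ref{posprojetivo1}, exchanging the roles that $\Hom$ and $\Ext^1$ played there. Write $Y=\tau^{t}I_{j}$. Since $Y$ is preinjective and every $F_i$, $G_i$ is regular, there are no nonzero morphisms from $Y$ to a regular module, so $\Hom_A(Y,\FF)=0=\Hom_A(Y,\GG)$ hold automatically; moreover $Y$ is an indecomposable preinjective module over a hereditary algebra, hence directing, so $\Ext^1_A(Y,Y)=0$ as well. Granting, as established above, that $(F,G)$ is itself a $s.s.$ (which follows from the orthogonality of the tubes $\TT^{\Delta}_{\infty}$ and $\TT^{\Delta}_{0}$), the only conditions left to analyze are $\Ext^1_A(Y,F_i)=0$ and $\Ext^1_A(Y,G_i)=0$ for all $i$. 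So, in contrast with the postprojective case, here it is the $\Ext$-vanishing that carries the content and the $\Hom$-vanishing that is free.

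First I would convert these Ext-conditions into a statement about supports. For a regular module $M$ the Auslander--Reiten formula gives $\Ext^1_A(\tau^{t}I_j,M)\cong D\Hom_A(M,\tau^{t+1}I_j)\cong D\Hom_A(\tau^{-(t+1)}M,I_j)$, where the last isomorphism uses, exactly as in the proof of Proposition~\ref{posprojetivo1}, that $\tau$ is invertible on the regular part. Since $\dim_K\Hom_A(N,I_j)=[N:S_j]$, this yields $\Ext^1_A(\tau^{t}I_j,M)=0$ if and only if $j\notin\supp\tau^{-(t+1)}M$. Letting $M$ range over $\FF$ and over $\GG$, I conclude that $(F,G,\tau^{t}I_j)$ is a $s.s.$ if and only if $j\in(\Supp\tau^{-(t+1)}\FF)'\cap(\Supp\tau^{-(t+1)}\GG)'$, the intersection of the complements taken inside $\{0,\ldots,p+q-1\}$.

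Next I would feed this into Lemma~\ref{suporte} with $n=t+1$, splitting according to the residues of $t+1$ modulo $p$ and modulo $q$. When $p\mid(t+1)$ and $q\mid(t+1)$ the two complements are $\{0\}\cup\{p,\ldots,p+q-1\}$ and $\{0,\ldots,p-1\}\cup\{p+q-1\}$, meeting in $\{0,p+q-1\}$, which gives items (3) and (4). When exactly one of $p,q$ divides $t+1$ the intersection collapses to a single vertex, producing items (1),(2),(5),(6). When neither divides $t+1$, the complements are the disjoint singletons $\{r_1\}$ and $\{p+r_2-1\}$ with $r_1\le p-1<p+r_2-1$, so no stratifying system arises. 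Reading off $j$ in each surviving case and translating the divisibility and residue conditions on $n=t+1$ back to conditions on $t$ (so that $p\mid(t+1)$ becomes $t\equiv p-1\,(\mod\,p)$, and a residue $n\equiv r+1$ matches the index $r$ of the statement) reproduces the list.

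The main obstacle is the index bookkeeping in this final step. The isomorphism in the second paragraph forces the relevant Auslander--Reiten translate to be $\tau^{-(t+1)}$ rather than $\tau^{-t}$, so every congruence appearing in the statement is a congruence on $t+1$ read back on $t$; and in items (5) and (6), where the statement writes $I_{r+1}$ and $I_{p+r}$, one must align the running index $r$ against the vertex $j$ singled out by the support computation. Keeping this shift consistent throughout, together with checking the degenerate behaviour when $p=1$ forces $\FF=\emptyset$, is the only delicate point; the Ext- and Hom-vanishing are otherwise immediate from the structure of $\mod K\Delta$.
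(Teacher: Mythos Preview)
Your proposal is correct and follows essentially the same route as the paper: reduce the problem to the single condition $j\in(\Supp\tau^{-(t+1)}\FF)'\cap(\Supp\tau^{-(t+1)}\GG)'$ via the Auslander--Reiten formula and the identity $\dim_K\Hom_A(N,I_j)=[N:S_j]$, then run Lemma~\ref{suporte} through a case split. The only cosmetic difference is that you organize the cases by the residues of $n=t+1$ modulo $p$ and $q$ (which is the natural parameter given the translate involved), whereas the paper splits on the residues of $t$ and then repeatedly rewrites $\Supp\tau^{-(t+1)}$ in terms of those; your packaging is slightly cleaner but the content is identical.
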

\begin{proof} Let $Y$ be a  preinjective $A$-module and let $M$ be a regular $A$-module. Then $Y\cong \tau^{k}I_{j}$, 
for $k\geq 0$ and $j \in \{0,1,\ldots,p,\ldots, p+q-1\}$. Hence $\Hom_A(Y,M)=0$. On the other hand, by the  Auslander formula,
we have that $\Ext^{1}_{A}(\tau^{k}I_j, M)\cong D\Hom_A(\tau^{-1}M,\tau^{k}I_j)\cong D\Hom_A(\tau^{-(k+1)}M,I_j).$
It follows that
$\Ext^{1}_{A}(\tau^{k}I_j, M)=0  \text{ if, and only if, } j \notin \supp\,\tau^{-(k+1)}M.$
Therefore we conclude that 
\[(F,G,\tau^{k}I_j) \text{ is a $s.s.$ } \text{ if, and only if, } j \in (\Supp \tau^{-(k+1)}\FF)'\cap (\Supp \tau^{-(k+1)}\GG)'.\]
We consider several cases in order to find all the $s.s.$  of the  form  $(F,G,Y)$ with $Y$ preinjective.

\noindent {\bf Case 1.} Let $Y \cong I_j$ with  $j\in \{0,1,\ldots,p,\ldots, p+q-1\}$.
Then,  by \ref{suporte}, we  have that $(\Supp \tau^{-1}\FF)' \cap (\Supp \tau^{-1}\GG )' = \{1\}\cap\{p\}= \phi.$
Therefore there is no $j$ such that $(F,G,I_j)$ is a $s.s.$.

\noindent {\bf Case 2.} Let $Y \cong \tau^{t}I_j$, with $t\geq 1$ such that  $p|t$ and $q|t$. Then
$$(\Supp\tau^{-(t+1)}\FF)' \cap  (\Supp\tau^{-(t+1)}\GG)'=(\Supp \tau^{-1}\FF)'\cap (\Supp \tau^{-1}\GG)' =\phi.$$

\noindent {\bf Case 3.} Let $Y \cong \tau^{t}I_j$, with $t$ such that $p \not|t$ and  $q|t$. Let
$t \equiv r\,\,(\mod\, p)$, $1 \leq r \leq p-1$. Here we need to consider two cases: when $r=p-1$ and
when $r\not=p-1$.
\begin{itemize}
\item If $r=p-1$, then using \ref{suporte} we have that
$$ \Supp \tau^{-(t+1)}\FF =\Supp \tau^{-(r+1)}\FF = \Supp \tau^{-p}\FF = \Supp \FF \text{ and }$$ 
$$\Supp \tau^{-(t+1)}\GG = \Supp \tau^{-1}\GG=\{0, \ldots, p+q-1\} \setminus\{p\}.$$
Hence $(\Supp \tau^{-(t+1)}\FF)' \cap \Supp (\tau^{-(t+1)}\GG)' =\{p\}$ and 
therefore  $(F,G,\tau^t I_{p})$ is a $s.s.$.
\item If $r \not =p-1$, we have that
$$\Supp\tau^{-(t+1)}\FF =\Supp\tau^{-(r+1)}\FF =\{0,\ldots,p+q-1\}\setminus\{r+1\}.$$
Then $(\Supp\tau^{-(t+1)}\FF)'\cap(\Supp\tau^{-(t+1)}\GG)' =\{r+1\}\cap \{p\}=\phi $. Hence 
there is no a $j$ such that $(F,G,\tau^{t}I_j)$ is a $s.s.$
\end{itemize}

\noindent {\bf Case 4.} Let $Y \cong \tau^{t}I_j$ with $t\geq 1$ such that $q \not|t$  and $p|t$. Let
$r\equiv t\,\,(\mod q)$, with $1\leq r\leq q-1$. Here we need to consider two cases: when  $r=q-1$ and 
when $r\not= q-1$.
\begin{itemize}
\item If $r=q-1$, then
$$\Supp \tau^{-(t+1)}\GG =\Supp \tau^{-(r+1)}\GG =\Supp \tau^{-q}\GG= \Supp \GG. \text{ and }$$
$$\Supp \tau^{-(t+1)}\FF = \Supp \tau^{-1}\FF = \{ 0, \ldots, p+q-1\} \setminus \{1\}.$$
It follows that $(\Supp  \tau^{-(t+1)}\FF)' \cap (\Supp \tau^{-(t+1)}\GG)' =\{1\}.$ Therefore $(F,G,\tau^t I_{1})$
is a $s.s.$
\item If $r \not =q-1$, we have that 
$$\Supp \tau^{-(t+1)}\GG = \Supp \tau^{-(r+1)}\GG  = \{0, \ldots p+q-1\} \setminus \{p+r\}. $$
Then $(\Supp \tau^{-(t+1)}\FF)'\cap (\Supp \tau^{-(t+1)}\GG)' = \{1\}\cap \{p+r\}=\phi$ and therefore 
there is no a $j$ such that  $(F,G,\tau^{t}I_j)$  is a $s.s.$
\end{itemize}

\noindent{\bf Case 5.} Let $Y \cong \tau^{t}I_j$, with $t$ such that $q \not|t$ and $p \not|t$.
Let $r_1$ and  $r_2$ such that $t \equiv r_1 \,\,(\mod\,p)$, $t \equiv r_2 \,\,(\mod\,q)$,
$1 \leq r_1 \leq p-1$ and $1 \leq r_2 \leq q-1$. We consider several cases:
\begin{itemize}
\item Let $r_1=p-1$ and let $r_2 = q-1$. We have that 
$$(\Supp \tau^{-(t+1)}\FF)'\cap (\Supp \tau^{-(t+1)}\GG)' =(\Supp \FF)' \cap (\Supp \GG)' =\{ 0, p+q-1\}.$$
Therefore $(F, G, \tau^{t}I_0)$ and $(F, G,\tau^{t}I_{p+q-1})$ are $s.s.$.
\item Let $r_2=q-1$ and let $r_1 \not = p-1$. Then
$$\Supp \tau^{-(t+1)}\GG = \Supp \GG  \text{ and  }$$                
$$\Supp \tau^{-(t+1)}\FF = \Supp \tau^{-(r_{1}+1)}\FF =\{0,\ldots, p+q-1\} \setminus \{r_{1}+1\}. $$
Hence $(F, G,\tau^t I_{r_{1}+1})$ is a $s.s.$.
\item Let $r_1=p-1$ and let $r_2 \not = q-1$. In this case we have that 
$$\Supp \tau^{-(t+1)}\FF =\Supp \FF \text{ and  }$$
$$\Supp \tau^{-(t+1)}\GG = \Supp \tau^{-(r_{2} +1)}\GG  =\{0,\ldots p+q-1\} \setminus \{p+r_{2}\}.$$
Then $(F, G,\tau^t I_{p+r_{2}})$ is a $s.s.$.
\item If $r_1 \not=p-1$ and $r_2 \not= q-1$, then
$$\Supp \tau^{-(t+1)}\GG = \Supp \tau^{-(r_{2} +1)}\GG =\{0,\ldots p+q-1\} \setminus \{p+r_{2}\} \text{ e }$$
$$\Supp \tau^{-(t+1)}\FF = \Supp \tau^{-(r_{1} +1)}\FF =\{0,\ldots, p+q-1\} \setminus \{r_{1}+1\}. $$
Consequently, there is no a $s.s.$ in this case.
\end{itemize}
\end{proof}

The following result is well known and it is not difficult to prove.

\begin{lemma}\label{moe} Given a finite dimensional  $K$-algebra $B$ and
\begin{displaymath}
\xymatrix{
0 \ar[r] & L \ar[r]^{(f' \,\, g')^{t}} & X \oplus Y \ar[r]^{(f\,\,g)}&  M \ar[r]& 0 
}
\end{displaymath}
an almost split sequence in $\mod B$ . Then
\begin{enumerate}
\item If $f'$ is a monomorphism (resp. epimorphism), then $g$ is a monomorphism (resp. epimorphism).
\item If $g'$ is a monomorphism (resp. epimorphism), then $f$ is a monomorphism (resp. epimorphism).
\end{enumerate}
\end{lemma}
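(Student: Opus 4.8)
The plan is to reduce everything to a diagram chase on the underlying short exact sequence, observing that the almost split hypothesis enters only insofar as it produces an exact sequence; no minimality or irreducibility of the components is actually needed. Writing $\alpha=(f'\,\,g')^{t}$ and $\beta=(f\,\,g)$, exactness supplies three facts I will use repeatedly: $\beta$ is an epimorphism, $\ker\beta=\Im\alpha$, and the composite vanishes, i.e. $ff'+gg'=0$. I would also record that statement (2) follows from statement (1) by the evident symmetry interchanging the summands $X$ and $Y$ (hence $f\leftrightarrow g$ and $f'\leftrightarrow g'$), so that it suffices to prove (1).

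For the monomorphism half of (1), assume $f'$ is a monomorphism and take $y\in\ker g$. Then $(0,y)\in X\oplus Y$ satisfies $\beta(0,y)=g(y)=0$, so $(0,y)\in\ker\beta=\Im\alpha$; choose $l\in L$ with $\alpha(l)=(0,y)$, that is $f'(l)=0$ and $g'(l)=y$. Since $f'$ is a monomorphism, $f'(l)=0$ forces $l=0$, whence $y=g'(l)=0$. Thus $\ker g=0$ and $g$ is a monomorphism.

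For the epimorphism half of (1), assume $f'$ is an epimorphism and fix $m\in M$. By surjectivity of $\beta$ there is $(x,y)\in X\oplus Y$ with $f(x)+g(y)=m$. Using that $f'$ is onto, pick $l\in L$ with $f'(l)=x$; applying $\beta\alpha=0$ to $l$ gives $f(x)+g(g'(l))=0$, i.e. $f(x)=-g(g'(l))$. Substituting, $m=f(x)+g(y)=g(y-g'(l))\in\Im g$, so $g$ is an epimorphism. The corresponding assertions about $f$ in part (2) are obtained by the identical argument after the symmetry interchange.

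I do not expect a genuine obstacle here, as the statement is a formal consequence of exactness. The only points needing a little care are bookkeeping the sign relation $ff'=-gg'$ coming from $\beta\alpha=0$ (used in the epimorphism case) and making explicit that the almost split hypothesis is inessential to the chase, so that one is not tempted to invoke irreducibility of the component maps.
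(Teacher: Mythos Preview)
Your argument is correct: the element chase works exactly as you describe, and you are right that only the exactness of the sequence is used, not the almost split property. The paper itself does not supply a proof of this lemma, merely noting beforehand that the result ``is well known and it is not difficult to prove''; your write-up fills in precisely that omitted routine verification.
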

\begin{prop}\label{fcpp} Let $A=K\Delta$. The postprojective component $\PP(A)$ of $\Gamma(\mod A)$ has 
the following properties:
\begin{enumerate}
\item  Any  irreducible morphism $W \lfd V$ between indecomposable modules in  $\PP(A)$ is a monomorphism.
\item The module $P_{p+q-1}$ and all their sucessors in $\PP(A)$ are sincere.
\item The integer $p$ is the minimal with the property that the modules $\tau^{-r}P_i$ with $i\in \{0,\ldots,p+q-1\}$ 
      are sincere.
\item If $0 \leq i \leq p-1$, then
\begin{itemize}
      \item The smallest integer $r$ such that $\tau^{-r}P_i$ is sincere is $r=p-i$. Furthermore, all
       the $A$-modules of the form $\tau^{-k}P_i$, with  $k>p-i$, are sincere.
      \item If $0<k<p-i$, then the composition factors of  $\tau^{-k}P_i$ are
      $S_j$ with $0 \leq j \leq i+k$ and $p \leq j \leq p+k-1$.
      \item The composition factors  of $P_i$ are $S_j$ with $0\leq j\leq i$.
      \end{itemize}
\item If $p \leq i \leq p+q-2$, then
      \begin{itemize}
      \item If $i\leq q-1$, then the smallest integer $r$ such that $\tau^{-r}P_i$ is sincere is
       $r=p.$ Furthermore, all the $A$-modules in the form $\tau^{-k}P_{i}$, with $k>p$, are sincere.
      \item If $q-1\leq i\leq p+q-1$, then the smallest integer $r$ such that $\tau^{-r}P_i$ is sincere 
      is $r=p+q-1-i.$
      Furthermore, all the $A$-modules of the form $\tau^{-k}P_{i}$, with $k>p+q-1-i$, are sincere.
      \item If $\tau^{-k}P_{i}$ is not sincere, then their composition factors are $S_j$ with $p\leq j \leq i+k$ 
      and $0\leq j\leq k$.
      \end{itemize} 
     \item If $1\leq k \leq p-1$, then the composition factors of $\tau^{-k}P_{0}$ are $S_j$ with $0\leq j \leq k$ 
     and $p\leq j \leq p-1+k$.

\end{enumerate}

\end{prop}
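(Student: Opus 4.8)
The plan is to argue entirely inside the postprojective component $\PP(A)$. Since $A=K\Delta$ is tame hereditary, $\PP(A)$ is standard and is obtained by knitting from the indecomposable projectives, its indecomposables being exactly the modules $\tau^{-k}P_i$ with $k\ge 0$ and $i\in\{0,\ldots,p+q-1\}$. The structural feature I will use repeatedly is that the underlying graph of $\Delta$ is the cycle $\widetilde{\mathbb{A}}_{p,q}$, hence $2$-regular; by the standard shape of the postprojective component this forces every non-projective indecomposable $M$ in $\PP(A)$ to have an almost split sequence with exactly two indecomposable middle terms. I would prove (1) first, since it is what makes the dimension vectors grow monotonically, then read off (4),(5),(6) by induction on $k$, and finally deduce (2) and (3).

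For (1) I would induct along the partial order of $\PP(A)$, on the target $V$ of an arrow $W\to V$. If $V=P_i$ is projective, the irreducible maps into $V$ are the inclusions of the indecomposable summands of $\rad P_i$; as $A$ is hereditary these summands are projective submodules of $P_i$, so the maps are monomorphisms (in particular there is nothing to check at the unique source $P_0=S_0$). If $V=M$ is non-projective, write its almost split sequence as $0\fd \tau M\fd E_1\oplus E_2\fd M\fd 0$. The two maps $\tau M\fd E_1$ and $\tau M\fd E_2$ are arrows with strictly smaller target, hence monomorphisms by the inductive hypothesis, and Lemma \ref{moe} applied to this sequence then forces both $E_1\fd M$ and $E_2\fd M$ to be monomorphisms. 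This closes the induction.

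For (4),(5),(6) I would compute dimension vectors by induction on $k$, using that the almost split sequence $0\fd M\fd E_1\oplus E_2\fd \tau^{-1}M\fd 0$ gives $\ldim\tau^{-1}M=\ldim E_1+\ldim E_2-\ldim M$, with base values $\ldim P_i$ read off $\Delta$ ($P_i$ supported on $\{0,\ldots,i\}$ for $0\le i\le p-1$, on $\{0\}\cup\{p,\ldots,i\}$ for $p\le i\le p+q-2$, and $\ldim P_{p+q-1}=(2,1,\ldots,1)$). Tracking the support as $k$ grows, one sees two fronts advancing one vertex per application of $\tau^{-1}$ along the two arms, which yields precisely the sets displayed in (4), (5) and (6); the smallest $r$ at which the support has swept every vertex is the asserted radius ($p-i$, and $p$ or $p+q-1-i$ respectively). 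Statement (2) is immediate once $\ldim P_{p+q-1}=(2,1,\ldots,1)$ is seen to be sincere: by (1) every arrow of $\PP(A)$ is a monomorphism, so $\ldim$ is nondecreasing componentwise along any directed path, and sincerity is preserved by passing to successors. Statement (3) is then the maximum over $i$ of the minimal radii of (4),(5),(6), which equals $p$ (attained at $i=0$), together with the remark that $\tau^{-r}P_0$ fails to be sincere for $r<p$.

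The conceptual steps are short; the genuine work, and the main obstacle, is the bookkeeping in the induction for (4),(5),(6): keeping exact track of which simples enter the support as the two fronts advance, treating the transitional step at which a module first becomes sincere, and respecting the asymmetry coming from $1\le p\le q$ (which is exactly why the lower-arm cases of (5) split according to $i\le q-1$ or $i\ge q-1$). Once (1) is available, however, this is a finite verification rather than a real difficulty.
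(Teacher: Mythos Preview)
Your proposal is correct and follows essentially the same route as the paper: part (1) is obtained by propagating monomorphisms through the meshes of $\PP(A)$ via Lemma~\ref{moe}, starting from the irreducible maps between projectives, and (2)--(6) are then read off by tracking supports along the component. The only cosmetic difference is that for (4)--(6) the paper determines the composition factors of $\tau^{-k}P_i$ by exhibiting explicit sectional paths of monomorphisms in $\PP(A)$ (i.e.\ by listing which projectives $P_j$ admit a path into $\tau^{-k}P_i$), whereas you compute dimension vectors recursively from the additivity on almost split sequences; both are the same knitting computation phrased in two standard ways.
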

\begin{proof} Our proof use the structure of the postprojective component $\PP(A)$ of $\Gamma(\mod A)$ which looks as follows
\begin{displaymath}
\xymatrix @!0 @R=2em @C=3pc{
& & & & P_{p+q-1} \ar[dr]&----&---- &----&----&--\\
& & & P_{p-1}\ar[dr]\ar[ur] & & \tau^{-1}P_{p-1}\ar[dr]\ar[ur] & & &&\\
& & \iddots \ar[ur] & & \tau^{-1}P_{p-2}\ar[ur]\ar[dr] & & \ddots \ar[dr] & &&\\
& P_{1} \ar[ur]\ar[dr] & & \iddots  \ar[ur] && \ddots \ar[dr] & & \tau^{-(p-1)}P_{1}\ar[dr] \ar[ur] &&\\
P_0 \ar[ur]\ar[dr] & &\tau^{-1}P_{0}\ar[ur] \ar[dr]& & & & \tau^{-(p-1)}P_{o}\ar[dr]\ar[ur] & & \tau^{-p}P_{o} \ar[ur] \ar[dr]&\\
& P_{p}\ar[ur]\ar[dr] & & \ddots \ar[dr] & & \iddots \ar[ur]&& \tau^{-(p-1)}P_p \ar[ur]\ar[dr]&&\\
& &\ddots\ar[dr] & & \tau^{-1}P_{p+q-3}\ar[ur] \ar[dr] & & \iddots \ar[ur] &&&\\
& & &P_{p+q-2}\ar[ur]\ar[dr]& & \tau^{-1}P_{p+q-2}\ar[ur]\ar[dr]&&&&\\
& & & &P_{p+q-1}\ar[ur]&----&----&----&----&--\\
}
\end{displaymath}

In order to prove (1) we observe that in the almost split sequence
$$ 0 \lfd P_0 \overset{(\alpha'\,\,\,\beta')^{t}} \lfd P_1\oplus P_p \overset{(\alpha\,\,\,\beta)}\lfd \tau^{-1}P_0 \lfd 0,$$
the morphisms $\alpha'$ and $\beta'$ are mono, because are irreducible morphisms betwen projective modules. Then, by \ref{moe}, 
$\alpha$ and  $\beta$ are mono too. Analogously, in all almost split sequences begining in a projective module the arrows represent 
monomorphisms. Hence, according to the shape $\PP(A)$, all the arrows in this component represent monomorphisms.

Now we prove (2). We note that in $\PP(A)$ there are paths 
$$P_0\fd P_1\fd\cdots\fd P_{p-1}\fd P_{p+q-1} \text{ and  }P_0 \lfd P_p \fd \cdots\fd P_{p+q-1} \fd P_{p+q-1},$$
where, by (1), all the arrows represent monomorphisms. Therefore, $P_{p+q-1}$ and all their sucessors are  sincere modules.

For (3), we observe that none of the predecessors  in  $\PP(A)$ of modules in the paths  below 
\begin{equation}\label{caminho1}
P_{p+q-1}\lfd \tau^{-1}P_{p-1} \lfd \tau^{-2}P_{p-2}\lfd \cdots \lfd \tau^{-(p-1)}P_{0} \lfd  \tau^{-p}P_{0} \end{equation}
\begin{equation}\label{caminho2}
P_{p+q-1}\lfd \tau^{-1}P_{p+q-2} \lfd \tau^{-2}P_{p+q-3}\lfd \cdots \lfd \tau^{-(p-1)}P_{p} \lfd \tau^{-p}P_{0} \end{equation}
have  $S_{p+q-1}$ as composition factor. In particular, $\tau^{-(p-1)}P_{0}$ don't have $S_{p+q-1}$ as composition factor. 
Therefote, by (1), none of their predecessors have $S_{p+q-1}$ as composition factor. On the other hand, any successor of 
$\tau^{-(p-1)}P_{0}$, and any of modules in (\ref{caminho1}) and (\ref{caminho2}), are sincere.

In order to show (4), we note that all the modules  $\tau^{-(p-i)}P_{i}$, with $0 \leq i \leq p-1 $,
are in the path (\ref{caminho1}) and therefore are sincere. Moreover, as stated earlier, any  predecessor of some of the
 modules which appear in (\ref{caminho1}) have no $S_{p+q-1}$ as composition factor. It follows that, if 
$0 \leq i \leq p-1$ then none of the modules of the form $\tau^{-k}P_{i}$, with $k<p-i$, are sincere. 

On the other hand, the predeccessors of  $P_i$, with  $0\leq i \leq p-1$, are the projectives
$P_0, P_1,\ldots,P_{i-1}$. Thus the compostion factors of $P_i$,  with $0\leq i \leq p-1$,
are $S_j$ with $0\leq j \leq i.$ Analogously, for $p \leq i <p+q-1$, the compostion factors of
$P_i$ are $S_j$, with $ p \leq j \leq i$ and $j=0$. 

Let $0 \leq i \leq p-1 $. Then in  $\PP(A)$ there are paths
\begin{equation}
P_{i+k}\fd \tau^{-1}P_{i+k-1} \fd \tau^{-2}P_{i+k-2}\fd \cdots \fd \tau^{-(k-1)}P_{i+1} \fd  \tau^{-k}P_{i} \end{equation}
\begin{equation}
\begin{array}{l}
 P_{p+k-1}\fd \tau^{-1}P_{p+k-2} \fd \tau^{-2}P_{p+k-3}\fd \cdots \fd \tau^{-(k-1)}P_{p} \fd \tau^{-k}P_{0} \fd \\
  \tau^{-k}P_{1}\fd  \tau^{-k}P_{2}\fd            \cdots \fd \tau^{-k}P_{i}
\end{array}
\end{equation}

Furthermore,  there are no paths between $P_{j}$, with $j \geq i+k$, and  $\tau^{-k}P_{i}$. Hence the composition factors of 
$\tau^{-k}P_i$, up to multiplicity, are the composition factors of $P_{i+k}$ and of $P_{p+k-1}.$ In other words, the composition 
factors of  $\tau^{-k}P_{i}$ are  $S_j$ with $0\leq j \leq i$ and $p \leq i <p+q-1$.

The proof of (5), is analogous to the previous  and part (6) is left to the reader, since it follows a similar argument.
\end{proof}

\begin{prop}\label{completo1} Let $A=K\Delta$. The  complete list of $s.s.$
$(X,F,G,Y)$, where $Y$ is a postprojective $A$-module, is as follows:
\begin{enumerate}
\item $(S_{p+q-1},F,G, P_0)$.
\item $(X,F,G, P_{p+q-1})$, where
      \begin{displaymath}
      \begin {array}{rll}
      X \cong &\left \{ \begin {array}{ll}
      \tau^{-p+1}P_{0}, \textrm{ if } p=q\\
      \tau^{-p+1}P_{q-1}, \textrm{ if } p \not =q\\
      \end{array} \right.\\
      \end{array}
      \end{displaymath}
\item $(X,F,G,\tau^{-t} P_{p+q-1})$ with $t\in \mathbb{N}$ such that $p|t$ and $q|t$, where
\begin{displaymath}
      \begin {array}{rll}
      X \cong &\left \{ \begin {array}{ll}
      \tau^{-t-p+1}P_{0}, \textrm{ if } p=q\\
      \tau^{-t-p+1}P_{q-1}, \textrm{ if } p \not =q\\
      \end{array} \right.\\
      \end{array}
\end{displaymath}
\item $(\tau^{-t+1}P_{p+q-1},F,G,\tau^{-t}P_0)$, with $t\in\mathbb{N}$
      such that $p|t$ and $q|t$.
\item $(\tau^{-t-(p-r-1)}P_{q+r-1},F,G,\tau^{-t} P_{p-r})$, with $t,r\in \mathbb{N}$ such that $q|t$, $t\equiv r \,\,(\mod \,p)$ 
      and $ 1 \leq r \leq p-1$. 
\item $(X, F,G,\tau^{-t}P_{p+q-r-1})$, with $t,r\in\mathbb{N}$ such that $p|t$, $t\equiv r \,\,(\mod \,q)$
      and $ 1 \leq r \leq q-1$, where
      \begin{displaymath}
      \begin {array}{rll}
      X \cong &\left \{ \begin {array}{ll}
      \tau^{-t-q+r+1}P_{p-q+r}, \textrm{ if } p \leq q-r\\
      \tau^{-t-p+1}P_{q-r-1}, \textrm{ if } p> q-r\\
      \end{array} \right.\\
      \end{array}
\end{displaymath}
\end{enumerate}
\end{prop}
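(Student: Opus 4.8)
The plan is to reduce the statement to an existence-and-uniqueness argument and then to an explicit identification. The sequence $(F,G,Y)$ is an exceptional sequence of size $(p-1)+(q-1)+1=p+q-1=\rank K_0(A)-1$, so it is one term short of being complete. By Lemma~\ref{estender} (applied with empty left block) it extends to a complete exceptional sequence, necessarily of the shape $(X,F,G,Y)$ for a single indecomposable exceptional module $X$, since exactly one slot must be filled; and by Lemma~\ref{unico} this $X$ is unique. Hence the whole content of the proposition is to name $X$ in each of the six cases of Proposition~\ref{posprojetivo1}: I would exhibit the module listed above and check it is admissible, uniqueness then excluding every other candidate.

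Since $(F,G,Y)$ is already a $s.s.$, adjoining $X$ in the first position amounts to $\Ext^1_A(X,X)=0$ together with $\Hom_A(W,X)=0=\Ext^1_A(W,X)$ for every $W\in\FF\cup\GG\cup\{Y\}$. The first condition is free, because $X$ is an indecomposable postprojective or preinjective module over a hereditary algebra, hence exceptional. For the rest I would separate two situations. If $X=\tau^{-s}P_k$ is postprojective, then $\Hom_A(F_i,X)=\Hom_A(G_j,X)=0$ automatically, there being no nonzero maps from regular to postprojective modules; the Auslander--Reiten formula together with a $\tau$-shift gives $\Ext^1_A(F_i,X)\cong D\Hom_A(P_k,\tau^{s+1}F_i)$ and likewise for $G_j$, so these vanish exactly when $k\notin\Supp\tau^{s+1}\FF$ and $k\notin\Supp\tau^{s+1}\GG$; finally $\Hom_A(Y,X)$ and $\Ext^1_A(Y,X)$ become, after shifting, conditions of the form $\Hom_A(P_k,N)=0$ for explicit postprojective modules $N$ built from $Y$, i.e. composition-factor conditions to be read off from Proposition~\ref{fcpp}. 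If instead $X$ is preinjective --- as happens in case (1), where $X=S_{p+q-1}=I_{p+q-1}$ --- the roles invert: every $\Ext^1_A(W,X)$ vanishes for free, while $\Hom_A(W,X)=0$ turns into the support condition that the relevant vertex does not lie in $\supp W$.

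With this dictionary I would run through the six shapes of $Y$. Lemma~\ref{suporte} evaluates $\Supp\tau^{s+1}\FF$ and $\Supp\tau^{s+1}\GG$ as the full vertex set $\{0,\dots,p+q-1\}$ with a single vertex removed, the omitted vertex depending only on the residues of $s+1$ modulo $p$ and modulo $q$; intersecting the two complements isolates the admissible vertex $k$, while forcing $\Hom_A(Y,X)=\Ext^1_A(Y,X)=0$ --- computed through the explicit shape of $\PP(A)$ and the sincerity and composition-factor data of Proposition~\ref{fcpp} --- fixes the translation power $s$, placing $X$ correctly relative to $Y$. The bifurcations ($p=q$ versus $p\ne q$ in (2) and (3), and $p\le q-r$ versus $p>q-r$ in (6)) arise precisely because the single omitted vertex falls on different arms of the tube-indexing according to these inequalities, so that the one unique module $X$ is named by two different expressions $\tau^{-s}P_k$.

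The conceptual part is short; the main obstacle is the combinatorial bookkeeping needed to pin down the exact translate $\tau^{-s}P_k$. Getting the exponent right requires tracking how the omitted vertex of $\Supp\tau^{s+1}\FF$ and $\Supp\tau^{s+1}\GG$ migrates as $s$ varies through its residues, and then confirming via the structure of $\PP(A)$ that the chosen module $X$ indeed satisfies $\Hom_A(Y,X)=\Ext^1_A(Y,X)=0$. I would also isolate case (1) at the outset, since there alone the completion is the preinjective module $I_{p+q-1}$ rather than a postprojective one, so that the automatic and the to-be-checked conditions trade places.
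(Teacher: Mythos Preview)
Your proposal is correct and follows essentially the same route as the paper: reduce via Lemmas~\ref{estender} and~\ref{unico} to exhibiting and verifying a single candidate $X$ in each case, split off the preinjective completion $X=I_{p+q-1}$ in case~(1), and for postprojective $X=\tau^{-s}P_k$ translate the regular constraints into the support condition $k\in(\Supp\tau^{s+1}\FF)'\cap(\Supp\tau^{s+1}\GG)'$ via Lemma~\ref{suporte}, then check the two conditions involving $Y$ using Proposition~\ref{fcpp}. The only point worth sharpening is that the paper handles $\Ext^1_A(Y,X)=0$ not through composition-factor data but through the general fact recorded in Lemma~\ref{repete}, namely $\Ext^1_A(\tau^{-t}P_l,\tau^{-t-r}P_m)=0$ for $r\ge 0$; invoking this lemma makes that step immediate rather than something to read off the shape of $\PP(A)$.
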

\begin{proof} The Proposition \ref{posprojetivo1}  characterized $s.s.$ of the form $(F, G, Y)$,  with $Y$
a  postprojective $A$-module. By using this characterization we obtain stratifying systems of type $(X,F,G,Y)$.
 
The quiver $\Delta$ has $p+q$ vértices then, by \ref{estender} and \ref{unico},
the module $X$ such that  $(X,F,G,\tau^{-t}P_l)$ is a $c.s.s.$ is unique. Thus, the proof consist in to verify 
that the indecomposable module $ X $ satisfies the following conditions:
\begin{itemize}
\item $\Ext_{A}^{1}(X,X)=0$.
\item  $\Hom_{A}(\FF,X)=0$ and $\Hom_{A}(\GG,X)=0$.
\item  $\Ext_{A}^{1}(\FF,X)=0$ and $\Ext_{A}^{1}(\GG,X)=0$.
\item  $\Hom_{A}(\tau^{-t}P_l,X)=0$ and $\Ext_{A}^{1}(\tau^{-t}P_l,X)=0$.
\end{itemize}
We observe that all the modules $X$ considered in this proof, except $I_ {p+q-1}$,
are of the form $X\cong\tau^{-k}P_j$, with $k \geq 0$, and therefore are indecomposable and have no self-extensions.
Moreover, there is no nonzero morphism from a regular module to a posprojective module, that is, if $R$ is a regular 
module then $\Hom_A(R,\tau^{-k}P_j)=0$. On the other hand, by the Auslander formula, we have that 
$$\Ext^{1}_A(R, \tau^{-k}P_{j}) \cong  D\Hom_A(\tau^{-k}P_{j}, \tau R)\cong D\Hom_A(P_{j}, \tau^{k+1}R).$$
Therefore we have that $\Ext^{1}_A(\FF,\tau^{-k}P_{j})=0 \text{ and }\Ext^{1}_A(\GG,\tau^{-k}P_{j})=0$ if, and only if, 
$ j\in (\Supp \tau^{k+1}\FF)' \cap (\Supp \tau^{k+1} \GG)'$.
In view of these observations, if $(F,G,\tau^{-t}P_l)$ is a $s.s.$ and
$X\cong\tau^{-k}P_j$, in order to show that $(X, F,G,\tau^{-t}P_l)$
is a $s.s.$ is sufficient to check the following conditions:                                                                            
\begin{itemize}
\item $j\in (\Supp \tau^{k+1}\FF)' \cap (\Supp \tau^{k+1} \GG)'.$
\item $\Hom_{A}(\tau^{-t}P_l,X)=0$  (ou equivalently $[\tau^{t}X:S_l]=0$).
\item  $\Ext_{A}^{1}(\tau^{-t}P_l,X)=0$.
\end{itemize}
In what follows we find these conditions for each of the sequences stated in the proposition.
\noindent (1) First let us check that $(S_{p+q-1},F,G, P_0)$  is a $s.s.$. As the vertice $p+q-1$ is a source then 
the  simple $A$-module $S_{p+q-1}$ is injective and thus 
      $$\Ext_{A}^{1}(\FF,S_{p+q-1})=0,\Ext_{A}^{1}(\GG,S_{p+q-1})=0 \text{ and }\Ext_{A}^{1}(P_{0},S_{p+q-1})=0.$$
On the other hand,  since  $p+q-1\notin(\Supp\,\FF \cup\Supp\,\GG)$ and  $S_{p+q-1}\cong I_{p+q-1}$  then 
    $$\Hom_{A}(\FF,S_{p+q-1})=0, \Hom_{A}(\GG, S_{p+q-1})=0 \text{ and  }\Hom_{A}(P_0, S_{p+q-1})=0.$$
    
\noindent (2) To complete the $s.s.$ $(F,G, P_{p+q-1})$  we have to consider two cases:
      \begin{itemize}
      \item If $p=q$ and $X \cong \tau^{-p+1}P_0$, then  by \ref{suporte} we have that
      $(\Supp \tau^{p}\FF)' \cap (\Supp \tau^{p} \GG)'= \{0, p+q-1\}.$
      On the other hand $[\tau^{-p+1}P_{0}:S_{p+q-1}]=0$, by \ref{fcpp} (6).
       Therefore the sequence $( \tau^{-p+1}P_0 ,F,G, P_{p+q-1})$
      is a $s.s.$.
    
            \item If $p\not=q$ and $X= \tau^{-p+1}P_{q-1}$ then we have that $(\Supp\tau^{p}\FF)' \cap (\Supp\tau^{p}\GG)'= \{q-1\}$
            and, by \ref{fcpp} (5), we have that $[\tau^{-p+1}P_{q-1}:S_{p+q-1}]=0$.
            \end{itemize}
In both cases we have the item 2 of the list.
 \vspace{0.3cm}

\noindent (3) In order to complete the $s.s.$ $(F,G,\tau^{-t} P_{p+q-1})$, with $t\geq 1$ such that
      $p|t$ and  $q|t$, we consider the following two cases:
\begin{itemize}
\item Suppose $p=q$. We claim that the sequence $(\tau^{-t-p+1}P_{0},F, G,\tau^{-t}P_{p+q-1})$, is a $s.s.$. In fact,
      $ (\Supp\tau^{t+p}\FF)' \cap (\Supp\tau^{t+p}\GG)'=\{0, p+q-1\}.$
     
      On the other hand, by \ref{fcpp} (6), 
      $\Hom_A(\tau^{-t}P_{p+q-1},\tau^{-t-p+1}P_{0})\cong 0.$
      Moreover, by \ref{repete} (1), \\$\Ext^{1}_A(\tau^{-t}P_{0},\tau^{-t-p+1}P_{q-1})=0.$
     These conditions complete the verification of our claim.
      \item Suppose $p\not =q$. In this case we see that $(\tau^{-t-p+1}P_{q-1},F,G,\tau^{-t}P_{p+q-1})$
      is a $c.s.s.$ Indeed, $(\Supp\tau^{t+p}\FF)'\cap (\Supp\tau^{t+p}\GG)'=\{q-1\}$ by \ref{suporte}, 
      $[\tau^{-p+1}P_{q-1}:S_{p+q-1}]=0$ by \ref{fcpp} (5)
      and finally by  \ref{repete} (1) $\Ext^{1}_A(\tau^{-t}P_{p+q-1},\tau^{-t-p+1}P_{q-1})=0$.

      \end{itemize} 
       \vspace{0.3cm}
      
\noindent (4) We show that $(\tau^{-t+1}P_{p+q-1},F, G, \tau^{-t}P_{0})$, with $t\geq 1$ such that
      $p|t$ and  $q|t$ is a $c.s.s.$. In fact,
      $(\Supp \tau^{t}\FF)' \cap (\Supp \tau^{t} \GG)'= \{ 0, p+q-1\}.$
       On the other hand, by using the Auslander formula we have 
      $$\Hom_A(\tau^{-t}P_{0},\tau^{-t+1}P_{p+q-1})\cong D\Ext^{1}_{A}(P_{p+q-1},\tau^{-1}P_{0})\cong 0$$
     
      and 
     $$ \Ext_A^{1}(\tau^{-t}P_{0},\tau^{-t+1}P_{p+q-1}) \cong D\Hom_A(P_{p+q-1},P_{0}) \cong 0. $$
     
     Then the conditions for the $s.s.$ are verified.
      \vspace{0.3cm}
      
\noindent (5) We prove that $(\tau^{-t-(p-r-1)}P_{q+r-1},F,G,\tau^{-t} P_{p-r})$, with $t,r\in \mathbb{N}$ such that  $q|t$,          
      $t\equiv r \,\,(\mod \,p)$ and $1\leq r\leq p-1$, is a $s.s.$. According to \ref{suporte}, we have 
      $$(\Supp \tau^{t+(p-r)}\FF)' \cap (\Supp \tau^{t+(p-r)} \GG)'=(\Supp \FF)' \cap (\Supp \tau^{p-r} \GG)'
                                                                 =\{q+r-1\}.$$                     
     
      Applying \ref{repete} (1) we can  assert that 
      $\Ext_A^{1}(\tau^{-t}P_{p-r},\tau^{-t-(p-r-1)}P_{q+r-1})=0.$ And  finally, by \ref{fcpp} (7), we have 
    $$\Hom_A(\tau^{-t}P_{p-r},\tau^{-t-(p-r-1)}P_{q+r-1})\cong \Hom_A(P_{p-r},\tau^{-(p-r-1)}P_{q+r-1})\cong 0.$$

     \vspace{0.3cm}
    
\noindent (6) Suppose that $t,r\in\mathbb{N}$ are such that $p|t$, $t\equiv r \,\,(\mod \,q)$
      and $ 1 \leq r \leq q-1$. Let us consider two cases:
     \begin{itemize}
     \item Assume that $p \leq q-r$. Then, by \ref{suporte}, we have that
    \[ \begin{array}{rcl}
      (\Supp\tau^{t+q-r}\FF)' \cap (\Supp \tau^{t+q-r}\GG)'& =& (\Supp \tau^{q-r} \FF)'\cap(\Supp\GG)'\\
                                                             & = &\{p-q+r \}.\\
      \end{array}\]
     We note that  $q-r-1 \geq 0 $, then  by \ref{repete} (1), have 
      $$\Ext_A^{1}(\tau^{-t}P_{p+q-r-1},\tau^{-t-q+r+1}P_{p-q+r})=0.$$
     Furthermore, by \ref{fcpp} (4) it follows that
     $$\Hom_A(\tau^{-t} P_{p+q-r-1},\tau^{-t-q+r+1} P_{p-q+r})\cong\Hom_A(P_{p+q-r-1},\tau^{-q+r+1}P_{p-q+r})\cong 0.$$
     
     Thus $(\tau^{-t-q+r+1} P_{p-q+r}, F,G, \tau^{-t} P_{p+q-r-1})$ is a $c.s.s.$.
     
     \item Suppose that  $q-r>p$. Let $l$ such that  $p+l=q-r$. Therefore by \ref{suporte}, we have that
     \[\begin{array}{rcl}
     (\Supp \tau^{t+p-1}\FF)' \cap (\Supp \tau^{t+p-1} \GG)'&=& (\Supp \tau^{-l}\FF)' \cap (\Supp \GG)' \\
                                                            &=&\{p+l-1\}\\
                                                            &=&\{q-r-1\}.\\
     \end{array}\]
     Now by \ref{repete} (1), we have that $\Ext_A^{1}(\tau^{-t}P_{p+q-r-1},\tau^{-t-p-1}P_{q-r-1})=0.$
     Finally by \ref{fcpp} (4) we have that $[S_{p+q-r-1}:\tau^{-p+1}P_{q-r-1}]=0.$
    
     We conclude that $(\tau^{-t-p+1} P_{q-r-1}, F,G, \tau^{-t} P_{p+q-r-1})$ is a $c.s.s.$.

     \end{itemize}
\end{proof}

Next result is proved in an analogous way as \ref{fcpp}.  

\begin{prop}Let $A=K\Delta$. The preinjective component $\QQ(A)$ of $\Gamma(\mod A)$ has the following properties:
\begin{enumerate}
\item Any irreducible  morphism $W\lfd V$ between indecomposable modules in $\QQ(A)$ is an epimorphism.
\item The $A$-module $I_{0}$  and all their predecessors in $\QQ(A)$ are  sincere modules.
\item The integer $p$ is minimal with the property that the modules  $\tau^{r}I_i$, with $i\in \{0,\ldots, p+q-1\}$
are sincere.
\item If $1\leq i \leq p-1$, then:
      \begin{itemize}
      \item the smallest integer $r$ such that $\tau^{r}I_{i}$ is a sincere module is $r=i$. 
      Furthermore, all the $A$-modules in the form $\tau^{k}I_{i}$, with $k\geq i$, are  sincere $A$-modules. 
      \item if $k<i$, then the composition factors of $\tau^{k}I_{i}$ are simple $A$-modules  $S_j$ with $i-k\leq j \leq p-1$ and
       $p+q-k \leq j \leq p+q-1$.      
      \end{itemize}  
\item If $p\leq i< p+q-1$, then:
      \begin{itemize} 
      \item if  $p\leq i< q-1$, the smallest integer $r$ such that $\tau^{r}I_{i}$ is a sincere module is 
      $r=i-p+1.$  Moreover,  all the $A$-modules of the form $\tau^{r}I_{i}$, with $r \geq i-p+1$,
      are sincere.           
      \item if $q-1\leq i\leq p+q-1$, the smallest integer $r$ such that $\tau^{r}I_{i}$ is a sincere $A$-module is  $r=p$. 
      Furthermore, all the $A$-modules of the form $\tau^{r}I_{i}$, with $r\geq p$ are sincere.
      \item if $p\leq i\leq p+q-2$ and  $k$ is such that  $\tau^{k}I_{i}$ is not a sincere module, then the composition
      factors of  $\tau^{k}I_{i}$ are simple $A$-modules  $S_j$ with $i-k\leq j\leq p+q-1$
      and $p-k \leq j\leq p-1$.
      \end{itemize}
\item If $k<p$, then  the composition of $\tau^{k}I_{p+q-1}$ are simple $A$-modules  $S_j$ with $p-k\leq j\leq p-1$
      and $p+q-k \leq j \leq p+q-1$.
     
\end{enumerate}
\end{prop}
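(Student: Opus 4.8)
The plan is to mirror the proof of Proposition \ref{fcpp} step by step, replacing the postprojective component $\PP(A)$ by the preinjective component $\QQ(A)$ and interchanging throughout the roles of monomorphisms and epimorphisms, of projectives and injectives, and of ``source/successors'' and ``sink/predecessors''. Conceptually this interchange is governed by the duality $D=\Hom_K(\nada,K)\colon \mod A \lfd \mod A^{op}$: since $A$ is hereditary, $A^{op}=K\Delta^{op}$ is again the path algebra of a canonically oriented quiver of type $\widetilde{\mathbb{A}}_{p,q}$ (the arrows of $\Delta$ reversed, with the sink $0$ and the source $p+q-1$ exchanged), and $D$ carries $\QQ(A)$ isomorphically, as a translation quiver, onto $\PP(A^{op})$, sending each $\tau^{k}I_j$ to a postprojective module $\tau^{-k}P_j$, epimorphisms to monomorphisms, and preserving the multiplicities $[M:S_i]=[DM:S_i]$ and hence sincerity. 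In principle every assertion could be read off from \ref{fcpp} applied to $A^{op}$; but since the relabelling of vertices reflects the indices along each arm, it is cleaner to run the arguments directly inside $\QQ(A)$, whose shape is the mirror image of the picture of $\PP(A)$ drawn in the proof of \ref{fcpp}, and I would begin by recording that reflected picture.

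For (1) I would start from the almost split sequences of $\QQ(A)$ ending at the indecomposable injectives. An irreducible morphism between two indecomposable injectives is an epimorphism, dually to the fact (used in \ref{fcpp}) that an irreducible morphism between indecomposable projectives is a monomorphism. Lemma \ref{moe}, which is stated in both a monomorphism and an epimorphism version, then propagates the epimorphism property through the component exactly as \ref{moe} propagated the monomorphism property in the projective case, so that \emph{every} arrow of $\QQ(A)$ represents an epimorphism.

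Part (2) follows as in \ref{fcpp}(2): the injective $I_0$ at the sink $0$ is sincere, because in a connected quiver the unique sink is reachable from every vertex, so $S_j\in\supp I_0$ for all $j$; and if $V$ is any predecessor of $I_0$, a path $V\fd\cdots\fd I_0$ of irreducible epimorphisms forces the composition factors of $I_0$ to occur among those of $V$, whence $V$ is sincere too. For (3)--(6) I would reflect the two distinguished paths (\ref{caminho1}) and (\ref{caminho2}) of \ref{fcpp} into their preinjective analogues terminating at $I_0$, and read off, via the epimorphism property, exactly which simples occur as composition factors of each $\tau^{k}I_j$. Here the relevant ``test'' simple is $S_0$ (the simple at the sink), playing the role that $S_{p+q-1}$ played for $\PP(A)$: a preinjective module becomes sincere precisely once $S_0$ enters its filtration, and this pins down the minimal powers $r$ in (3), (4) and (5) as well as the stated index ranges of composition factors in (4)--(6).

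The genuinely laborious part is the same as in \ref{fcpp}: the bookkeeping of the simples $S_j$ appearing in the non-sincere modules $\tau^{k}I_i$, that is, the determination of the precise index ranges in (4)--(6). This is purely combinatorial once the shape of $\QQ(A)$ and the epimorphism property from (1) are in hand, being carried out by tracing the two reflected paths and counting exactly as in the postprojective case; I would therefore regard it as routine and abbreviate the detailed computation, as the authors do for \ref{fcpp}.
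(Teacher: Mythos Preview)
Your proposal is correct and matches the paper's own treatment: the paper simply states that the result ``is proved in an analogous way as \ref{fcpp}'' and gives no further details, so your explicit mirroring of the postprojective argument via the duality $D$ (epimorphisms replacing monomorphisms, $I_0$ and $S_0$ playing the roles of $P_{p+q-1}$ and $S_{p+q-1}$, Lemma~\ref{moe} used in its epimorphism form) is exactly the intended proof, spelled out in more detail than the paper itself provides.
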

\begin{prop}\label{completo2} \label{fcpi}Let $A=K\Delta$. The complete list of $s.s.$
$(X,F,G,Y)$, where $Y$ is a preinjective $A$-module is as follows:
\begin{enumerate}

\item $(\tau^{t}I_{p-1},F,G,\tau^{t}I_{p})$, with $t\geq 1$ such that $t\equiv p-1\,\,(\mod\,p)$ and $q|t$.
\item $(\tau^{t}I_{p+q-2},F,G,\tau^{t}I_1)$, with $t\geq 1$ such that $t\equiv q-1\,\,(\mod\,q)$ and $p|t$. 
\item $(\tau^{t+1}I_{p+q-1},F,G,\tau^{t}I_{0})$, with $t\geq 1$ such that $t\equiv p-1\,\,(\mod\,p)$ and  $t\equiv q-1\,\,(\mod\,q)$.
\item $(\tau^{t-p+1}I_{q-1},F,G,\tau^{t}I_{p+q-1})$, with $t\geq 1$ such that $t\equiv p-1\,\,(\mod\,p)$ and $t\equiv q-1 \,\,(\mod \,q)$.
\item $(\tau^{t-r} I_{p+q-r-2},F,G,\tau^{t}I_{r+1})$, with $t\geq 1$ such that $t\equiv r\,\,(\mod\,p)$,
      $0<r<p-1$ e $t\equiv q-1\,\,(\mod\,q)$.
\item $(X,F,G,\tau^{t}I_{p+r})$, with $t\geq 1$ such that $t\equiv r\,\,(\mod\,q)$, $0<r<q-1$ and $t\equiv p-1\,\,(\mod\,p)$, where 
      \begin{displaymath}
      \begin {array}{rll}
      X \cong &\left \{ \begin {array}{ll}
      \tau^{t-r}I_{p-(r+1)}, \textrm{ if } r < p\\
      \tau^{t-(p-1)}I_{r}, \textrm{ if } r \geq p\\
      \end{array} \right.\\
      \end{array}
      \end{displaymath}

\end{enumerate}
\end{prop}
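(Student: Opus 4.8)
The plan is to mirror the proof of Proposition~\ref{completo1} almost verbatim, replacing the postprojective component by the preinjective one. First I would invoke Proposition~\ref{preinjetivo}, which already lists the six families of systems $(F,G,Y)$ with $Y$ preinjective; each has size $(p-1)+(q-1)+1=p+q-1$. Since $\Delta$ has $p+q$ vertices, Lemmas~\ref{estender} and~\ref{unico} show that every such system extends to a $c.s.s.$ by prepending a \emph{unique} indecomposable $X$. So it would remain only to produce, in each family, an $X$ of the stated form and to verify that $(X,F,G,Y)$ satisfies the two conditions of Definition~\ref{ss}; uniqueness then makes the list exhaustive.

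Next I would dispose of the automatic conditions. Every $X$ in the statement is an indecomposable preinjective module $X\cong\tau^{s}I_{m}$ with $s\ge 0$, hence exceptional, so $\Ext^{1}_{A}(X,X)=0$. Since $X$ is preinjective while the $F_{i}$ and $G_{j}$ are regular, the Auslander formula gives $\Ext^{1}_{A}(F_{i},X)\cong D\Hom_{A}(X,\tau F_{i})=0$ and $\Ext^{1}_{A}(G_{j},X)\cong D\Hom_{A}(X,\tau G_{j})=0$, because no nonzero morphism runs from a preinjective to a regular module; this is dual to the automatic $\Hom$-vanishing exploited in Proposition~\ref{completo1}. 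For $\Ext^{1}_{A}(Y,X)=0$ I would invoke Lemma~\ref{repete}(2) whenever the $\tau$-power of $Y$ dominates that of $X$, which holds in every family except~(3); for family~(3) I would compute directly from the Auslander formula, using that $I_{p+q-1}=S_{p+q-1}$ is simple (as $p+q-1$ is a source) to annihilate the resulting $\Hom$-space.

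The substance lies in the $\Hom$-vanishings. Using the translation isomorphism $\Hom_{A}(M,\tau^{s}I_{m})\cong\Hom_{A}(\tau^{-s}M,I_{m})$ together with $\dim_{K}\Hom_{A}(N,I_{m})=[N:S_{m}]$, the conditions $\Hom_{A}(\FF,X)=0=\Hom_{A}(\GG,X)$ translate into the combinatorial requirement
\[
m\in(\Supp\tau^{-s}\FF)'\cap(\Supp\tau^{-s}\GG)',
\]
which Lemma~\ref{suporte} evaluates explicitly. I would check in each family that the modular hypotheses on $t$ (and $r$) force these two complements to meet in the single vertex $m$ labelling the injective that defines $X$; this simultaneously pins down $X$ and secures both $\Hom$-vanishings. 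The last condition $\Hom_{A}(Y,X)=0$ I would reduce, by the same translation identity, to the statement that a prescribed simple fails to be a composition factor of a suitable $\tau$-translate of an indecomposable injective --- precisely the data recorded in the preceding proposition describing $\QQ(A)$, used here exactly as Proposition~\ref{fcpp} is used in the proof of Proposition~\ref{completo1}.

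I expect the principal difficulty to be the bookkeeping in family~(6), which bifurcates according to whether $r<p$ or $r\ge p$. There one must align three things at once: that the $\tau$-power arithmetic lands $X$ on the correct injective $I_{m}$, that $(\Supp\tau^{-s}\FF)'\cap(\Supp\tau^{-s}\GG)'$ is exactly $\{m\}$, and that the governing composition-factor multiplicity vanishes. The remaining families are routine once the support tables of Lemma~\ref{suporte}, the vanishing of Lemma~\ref{repete}, and the composition-factor description of $\QQ(A)$ are in hand, following the template of Proposition~\ref{completo1} line by line.
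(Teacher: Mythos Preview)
Your proposal is correct and follows essentially the same argument as the paper: reduce via Lemmas~\ref{estender} and~\ref{unico} to checking a unique candidate $X\cong\tau^{s}I_{m}$ in each family, dispose of $\Ext^{1}(\FF\cup\GG,X)$ by the Auslander formula, translate the $\Hom$-conditions into the support criterion $m\in(\Supp\tau^{-s}\FF)'\cap(\Supp\tau^{-s}\GG)'$ computed by Lemma~\ref{suporte}, handle $\Ext^{1}(Y,X)$ via Lemma~\ref{repete}(2) (with the direct Auslander computation in family~(3), where the $\tau$-power of $X$ exceeds that of $Y$), and read off $\Hom_{A}(Y,X)=0$ from the composition-factor description of $\QQ(A)$. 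The paper proceeds exactly this way, case by case.
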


\begin{proof} By \ref{estender} and  \ref{unico} there is an unique $A$-module $X$ such that $(X,F,G,\tau^{m}I_i)$ 
is a $s.s.$. Therefore the proof consists in  verify 
that the indecomposable module $ X $ satisfies the following conditions: 
\begin{itemize}
\item $\Ext_{A}^{1}(X,X)=0$.
\item  $\Hom_{A}(\FF,X)=0  \text{ and }\Hom_{A}(\GG,X)=0$.
\item  $\Ext_{A}^{1}(\FF,X)=0 \text{ and  } \Ext_{A}^{1}(\GG,X)=0$.
\item  $\Hom_{A}(\tau^{t}I_i,X)=0 \text{ and }\Ext_{A}^{1}(\tau^{t}I_i,X)=0$.
\end{itemize}
All modules $X$ that we will consider are of the form $X\cong\tau^{l}I_j$ and therefore are
indecomposable and have no self-extensions. Moreover if  $X$ is a preinjective $A$-module and $R$ is a regular $A$-module then, by using the Auslander formula, we have $\Ext^{1}_A(R,X)\cong D\Hom_A(X,\tau R)\cong 0$.
On the other hand, for $l\geq 0$, we have $\Hom_A(R,\tau^{l}I_j) \cong \Hom_A(\tau^{-l}R,I_j).$ It follows that
$\Hom_A(R,\tau^{l}I_j)=0 \Leftrightarrow j \notin \supp\tau^{-l}R.$ Therefore we have
$$\Hom_{A}(\FF,\tau^{l}I_j)=0 \text{ and }\Hom_{A}(\GG,\tau^{l}I_j)= 0 \Leftrightarrow j \in (\Supp\tau^{-l}\FF)'\cap (\Supp\tau^{-l}\GG)'.$$

According to the above remarks, if $(F,G,\tau^{t}I_i)$ is a $s.s.$ and $X\cong\tau^{l}I_j$, in order to prove 
that $(X,F,G,\tau^{t}I_i)$ is $c.s.s.$ is sufficient to show the following conditions:
\begin{enumerate}
\item[a. ]$j \in (\Supp \tau^{-l}\FF)'\cap (\Supp \tau^{-l}\GG)'$
\item[b. ] $\Hom_{A}(\tau^{t}I_i,X)=0$
\item[c. ] $\Ext_{A}^{1}(\tau^{t}I_i,X)=0$.
\end{enumerate}

We consider various possibilities, which dependend of the form of $t$, according to the list of the statement of \ref{preinjetivo}.

\noindent(1) Let $t\geq 1$ such that $t\equiv p-1\,\,(\mod \,p)$ and $q|t$. Then, by \ref{suporte}, we have 
      $$(\Supp \tau^{-t}\FF)' \cap(\Supp \tau^{-t}\GG)'=(\Supp \tau^{-(p-1)}\FF)'\cap(\Supp \tau^{-t}\GG)'= \{p-1\}.$$
      We observe that $\Hom_A(\tau^{t}I_{p},\tau^{t}I_{p-1})\cong \Hom_A(I_{p},I_{p-1}) \cong 0$, because $I_p$ has no $S_{p-1}$
      as composition factor.
     
      Moreover, by \ref{repete} (2), $\Ext^{1}_A(\tau^{t}I_{p},\tau^{t}I_{p-1})=0.$  Then 
      $(\tau^{t}I_{p-1},F,G,\tau^{t}I_{p})$ is a $s.s.$

\noindent(2) Let $t\geq 1$ such that $t\equiv q-1\,\,(\mod \,q)$ and $p|t$. By \ref{suporte} we have that
      $$(\Supp \tau^{-t}\FF)'\cap(\Supp \tau^{-t}\GG)'=(\Supp \tau^{-t}\FF)'\cap(\Supp \tau^{-(q-1)}\GG)'=\{p+q-2\}.$$
      On the other hand, provided that $I_1$ has no $S_{p+q-2}$ as composition factor then $\Hom_A(\tau^{t}I_{1},\tau^{t}I_{p+q-2})\cong \Hom_A(I_{1},I_{p+q-2})\cong 0$.
     
   Finally by \ref{repete} it follows that $\Ext^{1}_A(\tau^{t}I_{1},\tau^{t}I_{p+q-2})=0.$
    Therefore     $(\tau^{t}I_{p+q-2},F,G,\tau^{t}I_{1})$, with $t\geq 1$ such that $t\equiv q-1\,\,(\mod\,q)$ and $p|t$,        
     is a  $c.s.s.$.
      
\noindent(3) Let $t\geq 1$ such that $t\equiv p-1\,\,(\mod \,p)$ eand $t\equiv q-1\,\,(\mod\,q)$. By  \ref{suporte} we have that
     $$(\Supp \tau^{-t-1}\FF)'\cap(\Supp \tau^{-t-1}\GG)'= (\Supp \FF)'\cap(\Supp \GG)'=\{0, p+q-1\}.$$
    
     From the Auslander formula it may be concluded that 
     $$\Hom_A(\tau^{t}I_{0},\tau^{t+1}I_{p+q-1})\cong\Hom_A(I_{0},\tau I_{p+q-1})\cong D\Ext_A^{1}(I_{p+q-1},I_{0})=0$$
    and that
    $$\Ext^{1}_A(\tau^{t}I_{0},\tau^{t+1}I_{p+q-1})\cong D\Hom_{A}(\tau^{t+1}I_{p+q-1},\tau^{t+1}I_{0})
                                                   \cong \Hom_A(I_{p+q-1},I_{0})
                                                  =0.$$
     
    Consequently $(\tau^{t+1}I_{p+q-1},F,G,\tau^{t}I_{0})$, with $t\geq 1$ such that $t\equiv q-1\,\,(\mod\,q)$ and
     $t\equiv p-1\,\,(\mod\,p)$ is a $s.s.$
     
\noindent(4) Let  $t\geq 1$ such that  $t\equiv p-1 \,\,(\mod \,p)$ and  $t\equiv q-1 \,\,(\mod \,q)$. We show that 
      $(\tau^{t-p+1}I_{q-1},F,G,\tau^{t}I_{p+q-1})$ is a $c.s.s.$ over $A$. By
      \ref{suporte} follows that
      $$ (\Supp \tau^{-t+p-1}\FF)'\cap(\Supp \tau^{-t+p-1}\GG)'=(\Supp \FF)'\cap(\Supp \tau^{-(q-p)}\GG)'
                                                           =\{q-1\}.$$
      Thus,  $\Hom_A(\tau^{t}I_{p+q-1},\tau^{t-(p-1)}I_{q-1})\cong \Hom_A(\tau^{p-1}I_{p+q-1},I_{q-1})=0,$
      because by  \ref{fcpi} (5)  we have that  $[\tau^{p-1}I_{p+q-1}:S_{q-1}]=0.$  Moreover by  \ref{repete} (2), 
      we have that  $\Ext^{1}_A(\tau^{t}I_{p+q-1},\tau^{t-(p-1)}I_{q-1})=0.$
      
\noindent(5) Let $t\geq 1$ such that $t\equiv r\,\,(\mod\,p)$, $0<r<p-1$ and $t\equiv q-1\,\,(\mod\,q)$ and 
      $X\cong \tau^{t-r} I_{p+q-r-2}$.  First, by \ref{suporte}, we have that
      \[\begin{array}{rcl}
      (\Supp\tau^{-(t-r)}\FF)'\cap(\Supp\tau^{-(t-r)}\GG)'&=&(\Supp\FF)'\cap (\Supp\tau^{-(t-r)}\GG)'\\
                                                          &=& (\Supp\tau^{-(q-1-r)}\GG)'\\
                                                          &=&\{p+q-r-2\}.
      \end{array} \]
      Furthermore, $\Hom_A(\tau^{t}I_{r+1},\tau^{t-r}I_{p+q-r-2})\cong \Hom_A(\tau^{r}I_{r+1},I_{p+q-r-2})=0,$
      because,  by  \ref{fcpi} (5), $\tau^{r}I_{r+1}$ has no $S_{p+q-r-2}$
      as composition factor. Finally, by \ref{repete} (2),  we have that  $\Ext^{1}_A(\tau^{t}I_{r+1},\tau^{t-r}I_{p+q-r-2})=0.$ 
      Then  $(\tau^{t-r} I_{p+q-r-2},F,G,\tau^{t}I_{r+1})$ is a $c.s.s.$ over $A$.

\noindent(6) Let  $t\geq 1$ such that $t\equiv r\,\,(\mod\,q)$, $0<r<q-1$ and  $t\equiv p-1\,\,(\mod\,p)$.
      To complete the $s.s.$ $(F,G,\tau^{t}I_{p+r})$ we consider two situations:
      \begin{itemize}
      \item Suppose that  $r<p$.  Let $X\cong \tau^{t-r}I_{p-(r+1)}$. We have that 
       \[\begin{array}{rcl}
      (\Supp\tau^{-(t-r)}\FF)'\cap(\Supp\tau^{-(t-r)}\GG)'&=&(\Supp\tau^{-(p-1-r)}\FF)'\cap (\Supp \GG)'\\
                                                          &=& (\Supp\tau^{(r+1)}\FF)'\\
                                                          &=&\{p-(r+1)\}.\\
      \end{array} \]
      Since $[\tau^{r}I_{p+r}:S_{p-(r+1)}]=0$, by \ref{fcpi} (5), then 
      $$\Hom_A(\tau^{t}I_{p+r},\tau^{t-r}I_{p-(r+1)})\cong \Hom_A(\tau^{r}I_{p+r},I_{p-(r+1)})=0.$$
      By \ref{repete}, (2), we have that $\Ext^{1}_A(\tau^{t}I_{p+r},\tau^{t-r}I_{p-(r+1)})=0.$ Hence 
       $(\tau^{t-r}I_{p-(r+1)},F,G,\tau^{t}I_{p+r})$ with $t\equiv r\,\,(\mod\,q)$, $0<r<q-1$,    
      $t\equiv p-1\,\,(\mod\,p)$  and $r<p$  is a $s.s.$.
      
      \item Suppose that $r \geq p$ and let $X\cong \tau^{t-(p-1)}I_{r}$. Then we have that
      \[\begin{array}{rcl}
      \Supp\tau^{-[t-(p-1)]}\FF)'\cap(\Supp\tau^{-[t-(p-1)]}\GG)'&=&(\Supp\tau^{-[r-(p-1)]}\GG)'\\
                                                          &=& \{p+(r-p+1)-1 \}\\
                                                          &=&\{r\}. \\
      \end{array} \] 
      According to \ref{fcpi} (5), we have that  $$\Hom_A(\tau^{t}I_{p+r},\tau^{t-(p-1)}I_{r})\cong \Hom_A(\tau^{p-1}I_{p+r},I_{r}) \cong 0$$
      and by \ref{repete} we have  that $\Ext^{1}_A(\tau^{t}I_{p+r},\tau^{t-(p-1)}I_{r})=0.$ That is  the sequence $(\tau^{t-(p-1)}I_{r},F,G,\tau^{t}I_{p+r})$ with
      $t\equiv r\,(\mod\,q)$, $0<r<q-1$, $t\equiv p-1\,(\mod\,p)$ and  $r \geq p$  is a $s.s.$
     \end{itemize}
\end{proof}

Finally, \ref{completo1} and \ref{completo2} together are the main result of this section, which establish the complete list of $c.s.s.$ of the form $(X,F,G,Y)$. 

\begin{theorem} Let $A=K\Delta$. The  complete list of $s.s.$
$(X,F,G,Y)$  is as follows:
\begin{enumerate}
\item $(S_{p+q-1},F,G, P_0)$.
\item $(X,F,G, P_{p+q-1})$, where
      \begin{displaymath}
      \begin {array}{rll}
      X \cong &\left \{ \begin {array}{ll}
      \tau^{-p+1}P_{0}, \textrm{ if } p=q\\
      \tau^{-p+1}P_{q-1}, \textrm{ if } p \not =q.\\
      \end{array} \right.\\
      \end{array}
      \end{displaymath}
\item $(X,F,G,\tau^{-t} P_{p+q-1})$ with $t\geq 1$ such that $p|t$ and  $q|t$, where
\begin{displaymath}
      \begin {array}{rll}
      X \cong &\left \{ \begin {array}{ll}
      \tau^{-t-p+1}P_{0}, \textrm{ if } p=q\\
      \tau^{-t-p+1}P_{q-1}, \textrm{ if } p \not =q.\\
      \end{array} \right.\\
      \end{array}
\end{displaymath}
\item $(\tau^{-t+1}P_{p+q-1},F,G,\tau^{-t}P_0)$, with $t\geq 1$
      such that $p|t$ e $q|t$.
\item $(\tau^{-t-(p-r-1)}P_{q+r-1},F,G,\tau^{-t} P_{p-r})$, with $t\geq 1$ such that $q|t$ and  $r$ such that $t\equiv r \,(\mod \,p)$ 
      and $ 1 \leq r \leq p-1$. 
\item $(X, F,G,\tau^{-t}P_{p+q-r-1})$, with $t\geq 1$ such that $p|t$ and $r$ such that $t\equiv r \,\,(\mod \,q)$
      and $ 1 \leq r \leq q-1$.
      \begin{displaymath}
      \begin {array}{rll}
      X \cong &\left \{ \begin {array}{ll}
      \tau^{-t-q+r+1}P_{p-q+r}, \textrm{ if } p \leq q-r\\
      \tau^{-t-p+1}P_{q-r-1}, \textrm{ if } p> q-r.\\
      \end{array} \right.\\
      \end{array}
\end{displaymath}
\item $(\tau^{t}I_{p-1},F,G,\tau^{t}I_{p})$, with $t\geq 1$ such that $t\equiv p-1\,\,(\mod\,p)$ and $q|t$.
\item $(\tau^{t}I_{p+q-2},F,G,\tau^{t}I_1)$, with $t\geq 1$ such that $t\equiv q-1\,\,(\mod\,q)$ and $p|t$. 
\item $(\tau^{t+1}I_{p+q-1},F,G,\tau^{t}I_{0})$, with $t\geq 1$ such that $t\equiv p-1 \,\,(\mod\,p)$ and $t\equiv           q-1\,\,(\mod\,q)$.
\item $(\tau^{t-p+1}I_{q-1},F,G,\tau^{t}I_{p+q-1})$, with $t\geq 1$ such that $t\equiv p-1\,\,(\mod\,p)$ and $t\equiv        q-1 \,\,(\mod \,q)$.
\item $(\tau^{t-r} I_{p+q-r-2},F,G,\tau^{t}I_{r+1})$, with $t\geq 1$ such that $t\equiv r\,\,(\mod\,p)$,
      $0<r<p-1$ e $t\equiv q-1\,\,(\mod\,q)$.
\item $(X,F,G,\tau^{t}I_{p+r})$, with $t\geq 1$ such that $t\equiv r\,\,(\mod\,q)$, $0<r<q-1$ and $t\equiv p-1\,(\mod\,p)$, where 
      \begin{displaymath}
      \begin {array}{rll}
      X \cong &\left \{ \begin {array}{ll}
      \tau^{t-r}I_{p-(r+1)}, \textrm{ if } r < p\\
      \tau^{t-(p-1)}I_{r}, \textrm{ if } r \geq p.\\
      \end{array} \right.\\
      \end{array}
      \end{displaymath}
\end{enumerate}

\end{theorem}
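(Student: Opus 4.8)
The plan is to reduce the theorem to the two structural propositions \ref{completo1} and \ref{completo2}, so that the only remaining work is to glue them together and certify that their union is exhaustive. First I would record that the sequence $(X,F,G,Y)$ has length $p+q$, which is exactly the number of vertices of $\Delta$ and hence $\rank K_{0}(A)$; by Lemma \ref{incli} this is the maximal possible size of a stratifying system, so any such $(X,F,G,Y)$ is automatically a $c.s.s.$. Thus the problem is to describe, for the \emph{fixed} regular systems $F$ and $G$, all admissible pairs $(X,Y)$ that complete $(F,G)$ to a full-length system with $X$ inserted in front and $Y$ appended at the end.

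Next I would pin down the nature of $X$ and $Y$. Since $\Delta$ is a Euclidean quiver, $A=K\Delta$ is a concealed algebra of Euclidean type with $\rank K_{0}(A)=p+q$, and the orthogonality of the tubes $\TT^{\Delta}_{\infty}$ and $\TT^{\Delta}_{0}$ makes $F\cup G$ a regular stratifying system of size $(p-1)+(q-1)=p+q-2$. Were $Y$ regular, then $(F,G,Y)$, being a subsequence of the exceptional sequence $(X,F,G,Y)$, would be a regular stratifying system of size $p+q-1$, contradicting the bound $\rank K_{0}(A)-2=p+q-2$ of Theorem \ref{importante}; the same argument applied to $(X,F,G)$ rules out a regular $X$. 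Hence each of $X$ and $Y$ lies in the postprojective component $\PP(A)$ or the preinjective component $\QQ(A)$. Equivalently, $F\cup G$ is already a \emph{maximal} regular stratifying system, and this is precisely the fact that makes the enumeration finite and splits it into the two regimes of \ref{completo1} and \ref{completo2}.

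I would then run the dichotomy on $Y$. By the existence part of Lemma \ref{estender}, any system $(F,G,Y)$ of length $p+q-1$ extends to a complete one by inserting a single module in the first slot, and by Lemma \ref{unico} that module $X$ is \emph{unique}; so each valid $Y$ yields exactly one system $(X,F,G,Y)$. If $Y$ is postprojective, Proposition \ref{completo1} computes the unique $X$ and records precisely the families (1)--(6); if $Y$ is preinjective, Proposition \ref{completo2} does the same and records precisely the families (7)--(12). Since the previous step shows these two cases exhaust the possibilities for $Y$, the twelve families together form the complete list, and the theorem follows.

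I expect the genuine obstacle to lie entirely in certifying exhaustiveness rather than in any single computation. One must be sure the non-regularity argument is tight, i.e.\ that the bound $p+q-2$ of Theorem \ref{importante} is attained exactly by $F\cup G$ so that no regular $X$ or $Y$ can slip in, and that the uniqueness of Lemma \ref{unico} genuinely forces the $X$ computed in the two propositions, so that no admissible $(X,F,G,Y)$ is omitted and none is double-counted across the postprojective and preinjective cases. Once the maximality of the regular part and the uniqueness of the completion are secured, the residual content is exactly the case-by-case bookkeeping already performed in \ref{completo1} and \ref{completo2}.
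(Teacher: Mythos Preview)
Your proposal is correct and follows the same approach as the paper, which simply records that the theorem is the union of Propositions \ref{completo1} and \ref{completo2}. You are in fact more careful than the paper about exhaustiveness: you explicitly invoke Theorem \ref{importante} to rule out a regular $X$ or $Y$, a step the paper leaves implicit when it asserts that the postprojective and preinjective cases for $Y$ together give the complete list.
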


\section*{Acknowledgments}
The first named author thanks CAPES and CNPq for financial support, during her PhD. The authors thank Maria Izabel Ramalho Martins for helpful discussions. The first named author also thanks her friend  Heily Wagner. The second author thanks CNPq for the research grant. Both authors used a tematic grant from Fapesp, to visit Marcelo Lanzilotta in Universidad de la Rep\'ublica, Uruguay, they thank Fapesp for support and Marcelo for the warm hospitality.

\end{document}